\newcommand{\R}{{\mathbb{R}}}
\newcommand{\Vfree}{V}
\newcommand{\C}[1]{\mathbf{C^{#1}}}
\newtheorem{lem}{Lemma}[section]
\newtheorem{pro}{Proposition}[section]
\newtheorem{defi}{Definition}[section]
\theoremstyle{definition}
\newtheorem{example}{Example}[section]
 \renewcommand{\L}[1]{\mathbf{L^{#1}}}
\newcommand{\Lloc}[1]{\mathbf{L^{#1}_{loc}}}
\newcommand{\tv}{\mathrm{TV}}
\newcommand{\ronec}{\mathcal{R}_1^{\rm c}}
\newcommand{\rtwoc}{\mathcal{R}^{\rm c}_2}
\newcommand{\oset}[2]{%
  {\mathop{#2}\limits^{\vbox to -.5\ex@{\kern-\tw@\ex@
   \hbox{\scriptsize #1}\vss}}}}
\journal{}
\def\ps@pprintTitle{%
 \let\@oddhead\@empty
 \let\@evenhead\@empty
 \def\@oddfoot{}%
 \let\@evenfoot\@oddfoot}
\begin{document}

\begin{frontmatter}



\title{A macroscopic traffic model with phase transitions and local point constraints on the flow}


\author[L'Aquila]{Benyahia Mohamed\corref{mycorrespondingauthor}}
\cortext[mycorrespondingauthor]{Corresponding author}
\ead{benyahia.ramiz@gmail.com}

\author[Warsaw]{Massimiliano D.~Rosini}
\ead{mrosini@umcs.lublin.pl}

\address[L'Aquila]{Gran Sasso Science Institute\\
Viale F. Crispi 7, 67100 L'Aquila, Italy}

\address[Warsaw]{
Instytut Matematyki, Uniwersytet Marii Curie-Sk\l odowskiej\\
Plac Marii Curie-Sk\l odowskiej 1, 20-031 Lublin, Poland}

\begin{abstract}
In this paper we present a two phase model for vehicular traffics subject to point constraints on the flow, its motivation being, for instance, the modelling of the effects of toll booths along a road.
\end{abstract}

\begin{keyword}
Conservation laws \sep phase transitions \sep Lighthill-Whitham-Richards model \sep Aw-Rascle-Zhang model \sep unilateral constraint \sep Riemann problem

    \MSC 35L65 \sep 90B20 \sep 35L45
\end{keyword}

\end{frontmatter}



\section{Introduction}

Macroscopic traffic flow models has been a growing field of research in the last decade as it is finding real life applications related to traffic control, management and prediction, see the surveys \cite{bellomo2011modeling, survey2013, piccolitosinreview}, the books \cite{garavello2006traffic, Rosinibook} and the reference therein. 
Among these models, two of most noticeable importance are Lighthill, Whitham \cite{LWR1} and Richards \cite{LWR2} model (LWR)
\begin{align*}
&\rho_t+(v \, \rho)_x=0,
&v=\Vfree(\rho),
\end{align*}
and the Aw, Rascle \cite{ARZ1} and Zhang \cite{ARZ2} model (ARZ)
\begin{align*}
&\rho_t+(v \, \rho)_x=0,
&[\rho\,(v + p(\rho)]_t + [v \, \rho \, (v + p(\rho)]_x =0.
\end{align*}
Theses two models aim to predict the evolution in time $t$ of the density $\rho$ and the velocity $v$ of vehicles moving along a homogeneous highway parametrized by the coordinate $x \in \R$ and with no entries and exits.
Both of the models have drawbacks however in their modelling of the evolution of traffic flows. 
Indeed, LWR assigns a priori an explicit relation between density and velocity, that means that the model assumes that the velocity of the vehicles is entirely determined by the density and dismisses the possibility of different vehicle populations that might exhibit a different velocity for a given density.
Empirical studies show however that the density-flux diagram can be approximated by this kind of models only up to a certain density, across which the traffic changes phase from free to congested, the latter being better approximated by a second order models such as ARZ.
On the other hand ARZ is not well-posed near the vacuum.
In particular, when the density is close to zero, the solution does not depend in general continuously on the initial data.

This motivated the introduction in \cite{goatin2006aw} of a two phase model that describes free and congested phases by means of respectively LWR and ARZ.
Recall that this model was recently generalized in \cite{BenyahiaRosini01}. 
A couple of mathematical difficulties have to be highlighted.
First, the model consists of two systems of equations that prescribe the evolution in time of the traffic in two different phases, free and congested, and prescribes a specific set of admissible phase transitions. 
One difficulty is that it is not known a priori the curves in the $(x,t)$ plane dividing two phases.
The problem cannot be reduced therefore into solving two different systems in two distinct regions with prescribed boundary conditions.
As a consequence, defining a notion of weak or entropy solution via an integral condition (as it is standard in the field of PDEs) turns out to be a delicate task. 
In \cite{BenyahiaRosini01} it has been possible to do so when the characteristic field of the free phase is linearly degenerate, because in this case the flux in the free phase reduces to a restriction of the flux diagram of ARZ, which allowed to make use of the definition of weak and entropy solution introduced in \cite{BCM2order}.\newline
Another difficulty is the possibility that two phase transitions may interact with each other and cancel themselves.
In fact, for instance, it is perfectly reasonable to consider a traffic characterized by a single congested region $C$, with vehicles emerging at the front end of $C$ and moving into a free phase region with a velocity higher than the tail of the queue at the back end of $C$, so that after a certain time the congested region disappears and all the traffic is in a free phase. 
For this reason a global approach for the study of the corresponding Cauchy problem can not be applied, as it would require a priori knowledge of the phase transition curves; it is instead preferable to apply the wave-front tracking algorithm, as it allows to track the positions of the phase transitions.

The present article deals with the Riemann problem for the two phase model introduced in \cite{BenyahiaRosini01}, equipped with a local point constraint on the flow, meaning that we add the further condition that at the interface $x=0$ the flow of the solution must be lower than a given constant quantity $Q$. 
This models, for instance, the presence of a toll gate across which the flow of the vehicles cannot exceed the value $Q$.
The additional difficulty that this add to the mathematical modelling of the problem is that this time one can start with a traffic that is initially completely in the free phase, but congested phases arise in a finite time in the upstream of $x=0$, as it is perfectly reasonable in the case of a toll gate with a very limited capacity.
The aim of this paper is to establish two Riemann solvers for this model and to study their properties. 
These Riemann solvers will be the basis upon which to rely for the ulterior study of the Cauchy problem.

The paper is organized as follows.
In Section~\ref{sec:2} we state carefully the problem, introduce the needed notations and define the two Riemann solvers, see Definition~\ref{def:R1c} and Definition~\ref{def:R2c}. 
Then in Section~\ref{sec:3} we study their basic properties, some of which require lenghty and technical proofs that are postponed in the last section.

\section{The model and the main result}\label{sec:2}

The aim of this section is to propose two Riemann solvers for the two phase transition model developed in \cite{BenyahiaRosini01}, coupled with a point constraint on the flow \eqref{eq:model}.

\subsection{Assumptions and notations}

Let us first introduce the notation to the reader and explain its usage.
Fix $R_{\rm f}'' > R_{\rm f}' > 0$.
Consider the maps $\Vfree \in \C2([0,R_{\rm f}''];\R_+)$ and $p \in \C2([R_{\rm f}',+\infty);\R)$ such that $\Vfree(R_{\rm f}'') > 0$ and
\begin{gather}\tag{{\rm\bf H1}}\label{H1}
\begin{array}{c}
\Vfree_{\rho}(\rho) \leq 0,
~
\Vfree(\rho) + \rho \, \Vfree_{\rho}(\rho) > 0,
~
2\Vfree_{\rho}(\rho) + \rho \, \Vfree_{\rho\rho}(\rho) \leq 0
\quad\forall
\rho \in [0,R_{\rm f}''],
\end{array}
\\[5pt]\tag{{\rm\bf H2}}\label{H2}
\begin{array}{c}
p_{\rho}(\rho)>0,
~
2p_{\rho}(\rho) + \rho \, p_{\rho\rho}(\rho)>0
\quad\forall
\rho \in [R_{\rm f}',+\infty),
\end{array}
\\[5pt]\tag{{\rm\bf H3}}\label{H3}
\begin{array}{c}
\Vfree_{\rho}(\rho)+p_{\rho}(\rho)>0,
~
\Vfree(\rho) < \rho \, p_{\rho}(\rho) 
\quad\forall
\rho \in [R_{\rm f}',R_{\rm f}''].
\end{array}
\end{gather}
For later convenience, see \figurename~\ref{fig:domains}, we introduce the following notation:
\begin{align*}
&V_{\max} \doteq \Vfree(0),&
&V_{\min} \doteq \Vfree(R_{\rm f}''),
\\
&W_{\max} \doteq p(R_{\rm f}'') + V_{\min},&
&W_{\rm c} \doteq p(R_{\rm f}') + \Vfree(R_{\rm f}'),\\
&R_{\max} \doteq p^{-1}(W_{\max}),&
&R_{\rm c} \doteq p^{-1}(W_{\rm c}),&
&q_{\max} = R_{\rm f}'' \, V_{\min}.
\end{align*}
By definition we have $R_{\max}>R_{\rm f}''>0$, $R_{\rm c}>R_{\rm f}'>0$, $W_{\max}>W_{\rm c}$ and by \eqref{H2} the map $p^{-1} \colon [W_{\rm c} - \Vfree(R_{\rm f}'),W_{\max}] \to [R_{\rm f}',R_{\max}]$ is increasing.

To avoid technicalities in the exposition, we will throughout assume that $p$ is defined in $(0,+\infty)$ and that there it satisfies \eqref{H2}.

Some simple choices for $\Vfree$ and $p$, see \cite{AKMR2002, ARZ1, goatin2006aw}, are
\begin{align*}
\Vfree(\rho) \doteq v_{\max} \left[1-\frac{\rho}{R}\right],&&
p(\rho) \doteq
\begin{cases}
\dfrac{v_{\rm ref}}{\gamma} \left[\dfrac{\rho}{\rho_{\max}}\right]^\gamma,& \gamma>0,\\[7pt]
v_{\rm ref} \, \log\left[\dfrac{\rho}{\rho_{\max}}\right],& \gamma=0,
\end{cases}
\end{align*}
where $v_{\max}$, $R$, $\gamma$, $v_{\rm ref}$ and $\rho_{\max}$ are strictly positive parameters, that can be chosen so that \eqref{H1}-\eqref{H3} are satisfied, see \cite{BenyahiaRosini01} for the details.

\begin{figure}[ht]
\centering{
\begin{psfrags}
      \psfrag{a}[l,B]{$\rho$}
      \psfrag{b}[c,B]{$R_{\max}$}
      \psfrag{c}[c,B]{$~~R_{\rm f}''$}
      \psfrag{d}[c,B]{$R_{\rm c}~$}
      \psfrag{e}[c,B]{$R_{\rm f}'$}
      \psfrag{f}[r,B]{$R_{\rm f}'' \, V_{\min}$}
      \psfrag{g}[r,B]{$Q(u)$}
      \psfrag{h}[c,B]{$V_{\max}$}
      \psfrag{i}[c,B]{$V_{\min}$}
      \psfrag{l}[c,B]{$V_{\rm c}$}
      \psfrag{m}[c,c]{$\Omega_{\rm c}$}
      \psfrag{n}[c,B]{$\Omega_{\rm f}''~$}
      \psfrag{o}[c,B]{$\Omega_{\rm f}'$}
      \includegraphics[width=.3\textwidth]{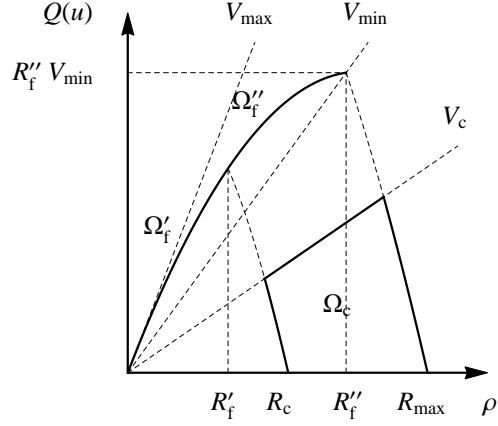}
\end{psfrags}}
\caption{Geometrical meaning of notations used through the paper.}
\label{fig:domains}
\end{figure}

Fix $V_{\rm c} \in \,]0, V_{\min}]$ and let $u \doteq (\rho,v)$ belong to $\Omega \doteq \Omega_{\rm f} \cup \Omega_{\rm c}$, where
\begin{align*}
&\Omega_{\rm f} \doteq \left\{u \in [0,R_{\rm f}''] \times [V_{\min},V_{\max}] \colon v = \Vfree(\rho)\right\}
&\text{and}&
&\Omega_{\rm c} \doteq \left\{\vphantom{R_{\rm f}''} u \in [R_{\rm f}',R_{\max}] \times [0,V_{\rm c}] \colon  W_{\rm c} \leq v + p(\rho) \leq W_{\max}\right\}
\end{align*}
are the domains of respectively free phases and congested phases.
Observe that $\Omega_{\rm f}$ and $\Omega_{\rm c}$ are invariant domains for respectively LWR and ARZ.
Introduce also the domains
\begin{align*}
&\Omega_{\rm f}' \doteq \left\{u \in \Omega_{\rm f} \colon \rho \in [0,R_{\rm f}') \right\},
&\Omega_{\rm f}'' \doteq \left\{u \in \Omega_{\rm f} \colon \rho \in [R_{\rm f}',R_{\rm f}''] \right\}.
\end{align*}
We finally introduce $u_{\rm c} \doteq (\rho_{\rm c} , V_{\rm c}) \colon \left[ W_{\rm c}, W_{\max}\right] \to \Omega_{\rm c}$ defined by $\rho_{\rm c}(w) \doteq p^{-1}(w-V_{\rm c})$, and $u_{\rm f} \doteq (\rho_{\rm f} , v_{\rm f}) \colon [ W_{\rm c}, W_{\max}] \to \Omega_{\rm f}''$ implicitly defined by $v_{\rm f}(w) = \Vfree(\rho_{\rm f}(w)) = w-p(\rho_{\rm f}(w))$.
We point out that $R_{\rm f}' = \rho_{\rm f}(W_{\rm c})$ and $R_{\rm f}'' = \rho_{\rm f}(W_{\max})$.

\subsection{The unconstrained Riemann problem}

The Riemann problem for the two phase model introduced in \cite{BenyahiaRosini01} has the form
\begin{align}\label{eq:model}
\begin{array}{@{}l@{}}
\text{\textbf{Free flow} (LWR)}\\[2pt]
\begin{cases}
u \in \Omega_{\rm f},\\
\rho_t+Q(u)_x=0,\\
v=\Vfree(\rho),
\end{cases}
\end{array}
&&
\begin{array}{l@{}}
\text{\textbf{Congested flow} (ARZ)}\\[2pt]
\begin{cases}
u \in \Omega_{\rm c},\\
\rho_t+Q(u)_x=0,\\
\left[\rho \, W(u)\right]_t + \left[Q(u) \, W(u)\right]_x=0,
\end{cases}
\end{array}
&&
\begin{array}{@{}l@{}}
\text{\textbf{Initial datum}}\\[2pt]
u(0,x) = \begin{cases}
u_\ell&\text{if }x<0,\\
u_r&\text{if }x>0,
\end{cases}\\\\
\end{array}
\end{align}
where $u_\ell,u_r \in \Omega$ and the maps $Q \colon \Omega \to [0,q_{\max}]$ and $W \colon \Omega \to [W_{\rm c},W_{\max}]$ are defined by
\begin{align*}
Q(u)\doteq \rho \, v,&
&W(u) \doteq 
\begin{cases}
v+p(\rho)&\text{if } u \in \Omega_{\rm c} \times \Omega_{\rm f}'',\\
W_{\rm c}&\text{if } u \in \Omega_{\rm f}'.
\end{cases}
\end{align*}
Above $\rho$ and $v$ denote respectively the density and the speed of the vehicles, while $\Vfree$ and $p$ give respectively the speed of the vehicles in a free flow and the ``pressure'' of the vehicles in a congested flow.
In the free phase the characteristic speed is $\lambda_{\rm f}(u) \doteq \Vfree(\rho) + \rho \, \Vfree_{\rho}(\rho)$.
In the following table we collect the informations on the system modelling the congested phase:
\begin{align*}
&r_1(u) \doteq (1,-p_{\rho}(\rho)),&
&r_2(u) \doteq (1,0),\\
&\lambda_1(u) \doteq v-\rho\,p_{\rho}(\rho),&
&\lambda_2(u) \doteq v,\\
&\nabla\lambda_1\cdot r_1(u) = -2p_{\rho}(\rho)-\rho\,p_{\rho\rho}(\rho),&
&\nabla\lambda_2\cdot r_2(u) = 0,\\
&\mathcal{L}_1(\rho;u_0) \doteq W(u_0)-p(\rho),&
&\mathcal{L}_2(\rho;u_0) \doteq v_0.
\end{align*}
Above $r_i$ is the $i$-th right eigenvector, $\lambda_i$ is the corresponding eigenvalue and $\mathcal{L}_i$ is the $i$-th Lax curve. 
By the assumptions \eqref{H1} and \eqref{H2} the characteristic speeds are bounded by the velocity, $\lambda_{\rm f}(u) \le v$, $\lambda_1(u) \leq \lambda_2(u) = v$, and $\lambda_1$ is genuinely non-linear, $\nabla\lambda_1 \cdot r_1(u) \neq0$.
Beside the Riemann solver introduced in \cite{BenyahiaRosini01}, here denoted by $\mathcal{R}_1$, we introduce in the following definition also a second Riemann solver $\mathcal{R}_2$.
Roughly speaking, the motivation is that the solution corresponding to $\mathcal{R}_2$ has flow through the constraint higher than that corresponding to $\mathcal{R}_1$.
We denote by $\mathcal{R}_{\rm LWR}$ and $\mathcal{R}_{\rm ARZ}$ the Riemann solvers for respectively LWR and ARZ.
To simplify our notation, we let $q_* \doteq Q(u_*)$ and $w_* \doteq W(u_*)$.
Recall that for any $u_\ell,u_r \in \Omega$ with $\rho_\ell\neq\rho_r$, the speed of propagation of a discontinuity between $u_\ell$ and $u_r$ is $\sigma(u_\ell,u_r) \doteq [q_r-q_\ell]/[\rho_r-\rho_\ell]$.

\begin{defi}\label{def:LWR-ARZ}
The Riemann solver $\mathcal{R}_1 \colon \Omega^2 \to \L\infty(\R;\Omega)$ is defined as follows:

\begin{enumerate}[label={($R_1$\alph*)},itemindent=*,leftmargin=0pt]\setlength{\itemsep}{0cm}%
\item
If $u_\ell, u_r \in \Omega_{\rm f}$, then $\mathcal{R}_1[u_\ell,u_r] \doteq\mathcal{R}_{\rm LWR}[u_\ell,u_r]$.
\item
If $u_\ell, u_r \in \Omega_{\rm c}$, then $\mathcal{R}_1[u_\ell,u_r] \doteq\mathcal{R}_{\rm ARZ}[u_\ell,u_r]$.
\item
If $(u_\ell, u_r) \in \Omega_{\rm f} \times \Omega_{\rm c}$, then we let $u_m \doteq (p^{-1}(w_\ell - v_r), v_r)$ and
\[
\mathcal{R}_1[u_\ell,u_r](x) \doteq 
\begin{cases}
u_\ell&\text{if } x<\sigma(u_\ell,u_m),\\
\mathcal{R}_{\rm ARZ}[u_m,u_r](x)&\text{if } x>\sigma(u_\ell,u_m).
\end{cases}
\]
\item
If $(u_\ell, u_r) \in \Omega_{\rm c} \times \Omega_{\rm f}$, then
\begin{align*}
&\mathcal{R}_1[u_\ell,u_r](x) \doteq 
\begin{cases}
\mathcal{R}_{\rm ARZ}[u_\ell,u_{\rm c}(w_\ell)](x)&\text{if } x<\sigma(u_{\rm c}(w_\ell),u_{\rm f}(w_\ell)),\\
\mathcal{R}_{\rm LWR}[u_{\rm f}(w_\ell),u_r](x)&\text{if } x>\sigma(u_{\rm c}(w_\ell),u_{\rm f}(w_\ell)).
\end{cases}
\end{align*}
\end{enumerate}
The Riemann solver $\mathcal{R}_2 \colon \Omega^2 \to \Lloc1(\R;\Omega)$ is defined as follows:
\begin{enumerate}[label={($R_2$)},itemindent=*,leftmargin=0pt]\setlength{\itemsep}{0cm}%
\item
If $(u_\ell,u_r) \in \Omega_{\rm f} \times \Omega_{\rm c}$, $\rho_\ell \ne 0$ and $w_\ell < w_r$, then
\[
\mathcal{R}_2[u_\ell,u_r](x) \doteq 
\begin{cases}
u_\ell&\text{if } x<\sigma(u_\ell,u_r),\\
u_r&\text{if } x>\sigma(u_\ell,u_r),
\end{cases}
\]
otherwise $\mathcal{R}_2[u_\ell,u_r] \doteq \mathcal{R}_1[u_\ell,u_r]$.
\end{enumerate}
\end{defi}
We remark that, differently from the phase transitions from $\Omega_{\rm f}''$ to $\Omega_{\rm c}$ introduced by $\mathcal{R}_1$, those introduced by $\mathcal{R}_2$ may not satisfy the second Rankine-Hugonot condition, namely they may not conserve the generalized momentum.

In \cite[Proposition~4.2]{BenyahiaRosini01} we proved that $\mathcal{R}_1$ is $\Lloc1$-continuous and consistent in $\Omega$.
Let us recall that a Riemann solver $\mathcal{S} \colon \Omega^2 \to \L\infty(\R;\Omega)$ is consistent in the invariant domain $D \subseteq \Omega$ if for any $u_\ell, u_m, u_r \in D$ and $\bar x \in \R$:
\begin{align}\label{P2}\tag{I}
  \mathcal{S}[u_\ell,u_r](\bar x) = u_m&
  & \Rightarrow &
  &&\begin{cases}
      \mathcal{S}[u_\ell,u_m](x)= 
      \begin{cases}
          \mathcal{S}[u_\ell,u_r](x)& \hbox{if } x < \bar x ,
          \\
          u_m & \hbox{if } x \ge \bar x ,
      \end{cases}
      \\[10pt]
      \mathcal{S}[u_m,u_r](x) =
      \begin{cases}
          u_m & \hbox{if } x < \bar x ,
          \\
          \mathcal{S}[u_\ell,u_r](x)& \hbox{if } x \geq \bar x.
      \end{cases}
      \end{cases}
      \\
      \begin{rcases}
  \mathcal{S}[u_\ell,u_m](\bar x)=u_m 
  \\\label{P3}\tag{II}
  \mathcal{S}[u_m,u_r](\bar x)=u_m
  \end{rcases}&
  & \Rightarrow &
  &&\mathcal{S}[u_\ell,u_r](x)=
      \begin{cases}
      \mathcal{S}[u_\ell,u_m](x)& \hbox{if } x < \bar x ,
      \\
      \mathcal{S}[u_m,u_r](x) & \hbox{if } x \geq \bar x .
      \end{cases}
\end{align}
It is easy to prove that $\mathcal{R}_2$ is $\Lloc1$-continuous but is not consistent in $\Omega$.
Indeed, for instance, $\mathcal{R}_2$ does not satisfy \eqref{P3} with $u_\ell \in \Omega_{\rm f}''$ and $u_m,u_r \in \Omega_{\rm c}$ such that $w_\ell = w_m < w_r$ and $v_m = v_r$.

\subsection{The constrained Riemann problem}

In this section we consider the Riemann problem \eqref{eq:model} coupled with a pointwise constraint on the flux
\begin{equation}\label{eq:constraint}
Q(u(t,0^\pm)) \leq Q_0,
\end{equation}
where $Q_0 \in (0,q_{\max})$ is a fixed constant.
In general, $[(t,x) \mapsto \mathcal{R}_i[u_\ell,u_r](x/t)]$ does not satisfy \eqref{eq:constraint}.
For this reason we introduce the sets
\begin{align*}
&\mathcal{C}_i \doteq \left\{ (u_\ell,u_r) \in \Omega^2 \colon Q(\mathcal{R}_i[u_\ell,u_r](0^\pm)) \le Q_0 \right\},
&\mathcal{N}_i \doteq \left\{ (u_\ell,u_r) \in \Omega^2 \colon Q(\mathcal{R}_i[u_\ell,u_r](0^\pm)) > Q_0 \right\},
\end{align*}
and for any $(u_\ell,u_r) \in \mathcal{N}_i$, we replace $[(t,x) \mapsto \mathcal{R}_i[u_\ell,u_r](x/t)]$ by another self-similar weak solution $[(t,x) \mapsto \mathcal{R}_i^{\rm c}[u_\ell,u_r](x/t)]$ to \eqref{eq:model}, satisfying \eqref{eq:constraint} and obtained by juxtaposing weak solutions constructed by means of $\mathcal{R}_i$.
It is easy to see that $\mathcal{C}_i = \mathcal{C}^1 \cup \mathcal{C}^2 \cup \mathcal{C}^3 \cup \mathcal{C}_i^4$ and $\mathcal{N}_i = \mathcal{N}^1 \cup \mathcal{N}^2 \cup \mathcal{N}^3 \cup \mathcal{N}_i^4$, where
\[
\begin{array}{l@{\qquad}l}
\mathcal{C}^1 \!\doteq\! \left\{\vphantom{p^{-1}} (u_\ell,u_r) \in \Omega_{\rm f}^2 \colon q_\ell \le Q_0 \right\},&
\mathcal{N}^1 \!\doteq\! \Omega_{\rm f}^2 \setminus \mathcal{C}^1,\\
\mathcal{C}^2 \!\doteq\! \left\{ (u_\ell,u_r) \in \Omega_{\rm c}^2 \colon p^{-1}(w_\ell-v_r) \, v_r \le Q_0 \right\},&
\mathcal{N}^2 \!\doteq\! \Omega_{\rm c}^2 \setminus \mathcal{C}^2,\\
\mathcal{C}^3 \!\doteq\! \left\{\vphantom{p^{-1}} (u_\ell,u_r) \in \Omega_{\rm c} \times \Omega_{\rm f} \colon Q(u_{\rm f}\left(w_\ell)\right) \le Q_0 \right\},&
\mathcal{N}^3 \!\doteq\! \left(\Omega_{\rm c} \times \Omega_{\rm f}\right) \setminus \mathcal{C}^3,\\
\mathcal{C}_1^4 \!\doteq\! \left\{\vphantom{p^{-1}} (u_\ell,u_r) \in \Omega_{\rm f} \times \Omega_{\rm c} \colon \min\left\{ q_\ell, p^{-1}(w_\ell-v_r) \, v_r\right\} \le Q_0 \right\},&
\mathcal{N}_1^4 \!\doteq\! \left(\Omega_{\rm f} \times \Omega_{\rm c}\right) \setminus \mathcal{C}_1^4,\\
\mathcal{C}_2^4 \!\doteq\! \left\{
(u_\ell,u_r) \in \Omega_{\rm f} \times \Omega_{\rm c} \colon 
\begin{array}{@{}l@{}}
w_\ell \le w_r \text{ and } \min\left\{ q_\ell, q_r\right\} \le Q_0, 
\text{ or} 
\\
w_\ell > w_r \text{ and } p^{-1}(w_\ell-v_r) \, v_r \le Q_0
\end{array}\right\},&
\mathcal{N}_2^4 \!\doteq\! \left(\Omega_{\rm f} \times \Omega_{\rm c}\right) \setminus \mathcal{C}_2^4.
\end{array}
\]
To simplify our notation, we let $\oset{*}{q} \doteq Q(\oset{*}{u})$ and $\oset{*}{w} \doteq W(\oset{*}{u})$.
\begin{defi}\label{def:R1c}
The Riemann solver $\ronec \colon \Omega^2 \to \L\infty(\R;\Omega)$ is defined as follows:
\begin{enumerate}[itemindent=*,leftmargin=0pt,label={($R_1^{\rm c}${\alph*})}]\setlength{\itemsep}{0cm}%

\item If $(u_\ell,u_r) \in \mathcal{C}_1$, then we let $\ronec[u_\ell,u_r] \doteq \mathcal{R}_1[u_\ell,u_r]$.

\item If $(u_\ell,u_r) \in \mathcal{N}_1$, then we let
\begin{equation}\label{eq:Rc}
\ronec[u_\ell,u_r](x) \doteq 
\begin{cases}
\mathcal{R}_1[u_\ell,\hat{u}](x)&\text{if }x<0,\\
\mathcal{R}_1[\check{u},u_r](x)&\text{if }x>0,
\end{cases}
\end{equation}
where $(\hat{u}, \check{u}) \in \Omega_{\rm c} \times \Omega$ satisfies
\begin{align}\label{eq:Rc1cond1}
&\mathcal{R}_1[u_\ell,\hat{u}](0^-) = \hat{u},\quad
\mathcal{R}_1[\check{u},u_r](0^+) = \check{u},\quad
\hat{q} = \check{q} \le Q_0,\quad
\hat{w} = w_\ell,
\\\label{eq:Rc1cond2}
&\text{if }(\hat{u}',\check{u}') \in \Omega^2 \setminus \{(\hat{u},\check{u})\} \text{ satisfies \eqref{eq:Rc1cond1}, then }
\hat{q}' < \hat{q}.
\end{align}
\end{enumerate}
\end{defi}
Observe that, by the consistency of $\mathcal{R}_1$, it is not restrictive to assume the first two conditions in \eqref{eq:Rc1cond1}, while the third one is required to ensure that $[(t,x) \mapsto \mathcal{R}_1^{\rm c}[u_\ell,u_r](x/t)]$ satisfies \eqref{eq:constraint} as well as the Rankine-Hugoniot conditions along $x=0$.
Let us also underline that if $(u_\ell,u_r) \in \mathcal{N}_1$, then $\hat{u}$ and $\check{u}$ must be distinct otherwise, again by the consistency of $\mathcal{R}_1$, we would have that $\mathcal{R}_1^{\rm c}[u_\ell,u_r]$ coincides with $\mathcal{R}_1[u_\ell,u_r]$, and this gives a contradiction.
Finally, in the following proposition we show that $\ronec$ is well defined.

\begin{pro}
For any $(u_\ell,u_r) \in \mathcal{N}_1$, $(\hat{u}, \check{u}) \in \Omega_{\rm c} \times \Omega$ is uniquely selected by \eqref{eq:Rc1cond1}, \eqref{eq:Rc1cond2} as follows:
\begin{enumerate}[label={($T^{\arabic*}_1$)},leftmargin=*]\setlength{\itemsep}{0cm}%
\item
If $(u_\ell, u_r) \in \mathcal{N}^1 \cup \mathcal{N}^3$, then we distinguish the following cases:
\begin{enumerate}[label={($T^1_1{\alph*}$)},leftmargin=*]\setlength{\itemsep}{0cm}%
\item\label{a} 
If $Q_0 > Q(u_{\rm c}(w_\ell))$, then $\hat{u} = u_{\rm c}(w_\ell)$, $\check{q} = \hat{q}$ and $\check{u} \in \Omega_{\rm f}$.
\item
If $Q_0 \le Q(u_{\rm c}(w_\ell))$, then $\hat{w} = w_\ell$, $\hat{q} = \check{q} = Q_0$ and $\check{u} \in \Omega_{\rm f}$.
\end{enumerate}
\item\label{g} 
If $(u_\ell, u_r) \in \mathcal{N}^2 \cup \mathcal{N}^4_1$, then we distinguish the following cases:
\begin{enumerate}[label={($T^2_1{\alph*}$)},leftmargin=*]\setlength{\itemsep}{0cm}%
\item\label{f} 
If $Q_0 \ge p^{-1}(W_{\rm c} - v_r) \, v_r$, then $\hat{w} = w_\ell$, $\hat{q} = \check{q} = Q_0$ and $\check{v} = v_r$.
\item
If $Q_0 < p^{-1}(W_{\rm c} - v_r) \, v_r$, then $\hat{w} = w_\ell$, $\hat{q} = \check{q} = Q_0$ and $\check{u} \in \Omega_{\rm f}'$.
\end{enumerate}
\end{enumerate}
\end{pro}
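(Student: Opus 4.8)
The plan is to decouple the determination of the pair $(\hat u,\check u)$ into two one-dimensional selection problems — one on the left and one on the right of the constraint — linked only through the flux-matching identity $\hat q=\check q$, and then to recombine them. The two conditions $\hat u\in\Omega_{\rm c}$ and $\hat w=w_\ell$ in \eqref{eq:Rc1cond1} immediately confine $\hat u$ to the arc $\Gamma_\ell\doteq\{u\in\Omega_{\rm c}\colon W(u)=w_\ell\}$, which is exactly the $1$-Lax curve issuing from $u_\ell$ prolonged into the congested region; symmetrically, $\check u$ is constrained only by $\mathcal R_1[\check u,u_r](0^+)=\check u$ together with $\check q=\hat q$, that is, $u_r$ must be reached from $\check u$ by a fan of non-negative speed.

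For the left state I would parametrize $\Gamma_\ell$ by $v\in[0,V_{\rm c}]$ through $\rho=p^{-1}(w_\ell-v)$ and study the flux $q(v)=v\,p^{-1}(w_\ell-v)$. Differentiating gives $q'(v)=\rho-v/p_{\rho}(\rho)$, whose sign is that of $\rho\,p_{\rho}(\rho)-v=-\lambda_1(u)$; since $v\le V_{\rm c}\le V_{\min}$, assumptions \eqref{H2} and \eqref{H3} yield $\lambda_1<0$ on $\Omega_{\rm c}$, so $q$ is strictly increasing and $v\mapsto q$ is a bijection of $[0,V_{\rm c}]$ onto $[0,Q(u_{\rm c}(w_\ell))]$. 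Because the wave joining $u_\ell$ to any $\hat u\in\Gamma_\ell$ — a backward $1$-wave if $u_\ell\in\Omega_{\rm c}$, a free-to-congested phase transition if $u_\ell\in\Omega_{\rm f}$ — has non-positive speed (a direct computation using $v_\ell\ge V_{\min}\ge V_{\rm c}$), the condition $\mathcal R_1[u_\ell,\hat u](0^-)=\hat u$ holds automatically along $\Gamma_\ell$, and the maximality requirement \eqref{eq:Rc1cond2} selects $\hat q=\min\{Q_0,Q(u_{\rm c}(w_\ell))\}$ together with the unique $\hat u\in\Gamma_\ell$ realizing it. This produces the dichotomy in case $(T^1_1)$ according to the sign of $Q_0-Q(u_{\rm c}(w_\ell))$; in case $(T^2_1)$ the membership $(u_\ell,u_r)\in\mathcal N^2\cup\mathcal N_1^4$ forces $Q_0<p^{-1}(w_\ell-v_r)\,v_r\le Q(u_{\rm c}(w_\ell))$, whence $\hat q=Q_0$.

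For the right state, with $\check q=\hat q$ now fixed, I would split on the phase of $u_r$. If $u_r\in\Omega_{\rm f}$ (case $(T^1_1)$), then by \eqref{H1} the free flux is strictly increasing, $\frac{\d}{\d\rho}[\rho\,\Vfree(\rho)]=\lambda_{\rm f}(u)>0$, so every free datum propagates forward and the unique $\check u\in\Omega_{\rm f}$ with $Q(\check u)=\hat q$ fulfils the $0^+$ condition; excluding a congested alternative of the same flux requires a separate argument, since such a state would have to project onto $\{v=V_{\rm c}\}$ to survive the backward $1$-wave opening any congested-to-free solution, after which it is eliminated either by coinciding with $\hat u$ or through the sign of the ensuing phase-transition speed. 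If $u_r\in\Omega_{\rm c}$ (case $(T^2_1)$), I would compare $\check q=Q_0$ with the threshold $p^{-1}(W_{\rm c}-v_r)\,v_r$: when $Q_0$ is at least this value, the state $(Q_0/v_r,v_r)$ still lies in $\Omega_{\rm c}$ and is joined to $u_r$ by a forward $2$-contact of speed $v_r\ge0$, giving $\check v=v_r$; when $Q_0$ is smaller, $(Q_0/v_r,v_r)$ drops below the boundary $\{W=W_{\rm c}\}$, and the only admissible state of flux $Q_0$ is the free one $\check u\in\Omega_{\rm f}'$, connected to $u_r$ by a forward phase transition. In each instance strict monotonicity of the flux on the relevant branch yields uniqueness of $\check u$.

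The step I expect to be the main obstacle is this right-state analysis, because there $\check u$ is a priori free to lie anywhere in $\Omega=\Omega_{\rm f}\cup\Omega_{\rm c}$, so one must run through the full wave structure of $\mathcal R_1$ for every pairing and verify both that the candidate genuinely carries $\check u$ at $0^+$ (a sign condition on the slowest wave speed) and that no competing state of the same flux survives in the other phase; the left state, by contrast, is routine once $\Gamma_\ell$ and the sign of $\lambda_1$ are secured. A closing bookkeeping step checks that the two thresholds $Q(u_{\rm c}(w_\ell))$ and $p^{-1}(W_{\rm c}-v_r)\,v_r$ partition $\mathcal N_1$ exactly into the cases $(T^1_1)$–$(T^2_1)$ and that the boundary values are assigned consistently.
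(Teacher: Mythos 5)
The paper never proves this proposition: unlike Propositions~\ref{pro:consistencyr1c} and~\ref{pro:inv-Rc}, whose proofs are deferred to the technical section, the well-definedness of $\ronec$ is stated without any argument, so there is no proof of record to compare yours against; I can only judge your proposal on its own merits. Your strategy is sound and, as far as I can check, correct: confining $\hat u$ to the arc $\Gamma_\ell=\{u\in\Omega_{\rm c}\colon W(u)=w_\ell\}$, deriving strict monotonicity of the flux along $\Gamma_\ell$ from the sign of $\lambda_1$, noting that the left trace condition then comes for free, and splitting the right-state analysis by the phase of $u_r$ does reproduce exactly the four cases of the statement, with the thresholds $Q(u_{\rm c}(w_\ell))$ and $p^{-1}(W_{\rm c}-v_r)\,v_r$ emerging where they should.

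Three points need tightening before this is a complete proof. First, $\lambda_1<0$ on all of $\Omega_{\rm c}$ does not follow from \eqref{H3} alone, since \eqref{H3} only covers $\rho\in[R_{\rm f}',R_{\rm f}'']$ while $\rho$ ranges up to $R_{\max}$ in $\Omega_{\rm c}$: you need \eqref{H2} to see that $\lambda_1=w-p(\rho)-\rho\,p_\rho(\rho)$ is strictly decreasing in $\rho$ along each curve $W=w$, and then evaluate at $\rho_{\rm f}(w)\le\rho_{\rm c}(w)$, where \eqref{H3} applies; you cite the right hypotheses but should display this chain. Second, your blanket claim that the wave joining $u_\ell$ to \emph{any} $\hat u\in\Gamma_\ell$ has non-positive speed is false when $u_\ell\in\Omega_{\rm f}'$ carries a small flux, since then $\sigma(u_\ell,\hat u)=[\hat q-q_\ell]/[\hat\rho-\rho_\ell]$ can be positive; what saves the argument is that any pair satisfying \eqref{eq:Rc1cond1} with $(u_\ell,u_r)\in\mathcal{N}_1$ automatically has $\hat q\le Q_0<q_\ell$, so the velocity ordering $v_\ell\ge V_{\min}\ge V_{\rm c}$ you invoke must be supplemented by this flux inequality. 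Third, the step you yourself flag as the main obstacle — excluding, in case $(T_1^2{\rm a})$, a free competitor $\check u'\in\Omega_{\rm f}''$ with $Q(\check u')=Q_0$ — does go through, but only via the observation that the flux strictly decreases when moving from $u_{\rm f}(\check w')$ down the curve $W=\check w'$ to the velocity level $v_r$ (again by $\lambda_1<0$ there), so that the intermediate state $u_m'$ of Definition~\ref{def:LWR-ARZ} satisfies $Q(u_m')<Q_0$, the phase-transition speed $\sigma(\check u',u_m')$ is negative, and the $0^+$ trace condition fails; this computation, and its analogue ruling out congested competitors when $u_r\in\Omega_{\rm f}$, should be written out rather than sketched, since they are precisely what makes the maximizing pair in \eqref{eq:Rc1cond2} unique rather than merely maximal in flux.
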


Differently from the constrained Riemann solvers introduced in \cite{AndreianovDonadelloRosini1, BCM2order, ColomboGoatinConstraint, GaravelloGoatin1}, in the case \ref{a} we have that $(u_\ell,u_r) \in \mathcal{N}_1$ but $Q(\ronec[u_\ell,u_r](0^\pm)) \neq Q_0$.
For this reason we introduced the Riemann solver $\mathcal{R}_2$, that we use now to construct another constrained Riemann solver $\rtwoc$ such that if $(u_\ell,u_r) \in \mathcal{N}_2$ and $Q_0 \leq Q(u_{\rm c}(W_{\max}))$, then $Q(\rtwoc[u_\ell,u_r](0^\pm)) = Q_0$.

\begin{defi}\label{def:R2c}
The Riemann solver $\rtwoc \colon \Omega^2 \to \L\infty(\R;\Omega)$ is defined as follows:
\begin{enumerate}[itemindent=*,leftmargin=0pt,label={($R_2${\alph*})}]\setlength{\itemsep}{0cm}%

\item If $(u_\ell,u_r) \in \mathcal{C}_2$, then we let $\rtwoc[u_\ell,u_r] \doteq \mathcal{R}_2[u_\ell,u_r]$.

\item If $(u_\ell,u_r) \in \mathcal{N}^3$ and $Q_0 > Q(u_{\rm c}(w_\ell))$, then we let
\begin{equation*}
\rtwoc[u_\ell,u_r](x) \doteq 
\begin{cases}
\mathcal{R}_2[u_\ell,u_{\rm f}(w_\ell)](x)&\text{if } x < \sigma(u_{\rm f}(w_\ell),\hat{u}),\\
\hat{u}&\text{if }\sigma(u_{\rm f}(w_\ell),\hat{u}) < x < 0,\\
\mathcal{R}_2[\check{u},u_r](x)&\text{if }x>0,
\end{cases}
\end{equation*}
otherwise we let
\begin{equation*}
\rtwoc[u_\ell,u_r](x) \doteq 
\begin{cases}
\mathcal{R}_2[u_\ell,\hat{u}](x)&\text{if }x<0,\\
\mathcal{R}_2[\check{u},u_r](x)&\text{if }x>0,
\end{cases}
\end{equation*}
where, in both cases, $(\hat{u}, \check{u}) \in \Omega_{\rm c} \times \Omega$ satisfies
\begin{align}\label{eq:Rc2cond1}
&
\mathcal{R}_2[\check{u},u_r](0^+) = \check{u},\quad
\hat{q} = \check{q} = \min\{Q_0,Q(u_{\rm c}(W_{\rm max}))\},\quad
\hat{w} \ge w_\ell,
\\\label{eq:Rc2cond2}
&\text{if }(\hat{u}',\check{u}') \in \Omega_{\rm c} \times \Omega \setminus \{(\hat{u},\check{u})\} \text{ satisfies \eqref{eq:Rc2cond1}, then }
\hat{w}' > \hat{w}
\text{ or }
\check{w}' < \check{w}.
\end{align}
\end{enumerate}
\end{defi}

In the following proposition we show that $\rtwoc$ is well defined.

\begin{pro}
For any $(u_\ell,u_r) \in \mathcal{N}_2$, $(\hat{u}, \check{u}) \in \Omega_{\rm c} \times \Omega$ is uniquely selected by \eqref{eq:Rc2cond1}, \eqref{eq:Rc2cond2} as follows:
\begin{enumerate}[label={($T^{\arabic*}_2$)},leftmargin=*]\setlength{\itemsep}{0cm}%
\item If $(u_\ell, u_r) \in \mathcal{N}^1 \cup \mathcal{N}^3$, then we distinguish the following cases:
\begin{enumerate}[label={($T^1_2${\alph*})},leftmargin=*]\setlength{\itemsep}{0cm}%
\item If $Q_0 > Q(u_{\rm c}(W_{\max}))$, then $\hat{u} = u_{\rm c}(W_{\max})$, $\check{q} = \hat{q}$ and $\check{u} \in \Omega_{\rm f}$.
\item If $Q_0 \leq Q(u_{\rm c}(W_{\max}))$, then $\hat{w} = \max\left \{w_\ell, V_{\rm c}+p\left (Q_0/V_{\rm c}\right )\right \}$, $\hat{q} = \check{q} = Q_0$ and $\check{u} \in \Omega_{\rm f}$.
\end{enumerate}
\item If $(u_\ell, u_r) \in \mathcal{N}^2 \cup \mathcal{N}^4_2$, then we distinguish the following cases:
\begin{enumerate}[label={($T^2_2${\alph*})},leftmargin=*]\setlength{\itemsep}{0cm}%
\item If $Q_0 \ge p^{-1}(W_{\rm c} - v_r) \, v_r$, then $\hat{w} = w_\ell$, $\hat{q} = \check{q} = Q_0$ and $\check{v} = v_r$.
\item If $Q_0 < p^{-1}(W_{\rm c} - v_r) \, v_r$, then $\hat{w} = w_\ell$, $\hat{q} = \check{q} = Q_0$ and $\check{u} \in \Omega_{\rm f}'$.
\end{enumerate}
\end{enumerate}
\end{pro}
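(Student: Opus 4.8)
The plan is to use that, in \eqref{eq:Rc2cond1}, the upstream state $\hat u$ and the downstream state $\check u$ are linked only through the common flow value $\bar q \doteq \min\{Q_0, Q(u_{\rm c}(W_{\max}))\}$: apart from sharing $\bar q$, the state $\hat u$ ranges over $\{u \in \Omega_{\rm c} : Q(u) = \bar q,\ W(u) \ge w_\ell\}$, whereas $\check u$ is restricted only by the stationarity requirement $\mathcal{R}_2[\check u, u_r](0^+) = \check u$. Hence the feasible pairs form a product set in the variables $(\hat w, \check w)$, and the contrapositive of \eqref{eq:Rc2cond2}—that no distinct admissible pair can have both $\hat w' \le \hat w$ and $\check w' \ge \check w$—singles out precisely the pair whose first component minimizes $\hat w$ and whose second component maximizes $\check w$, both at flow $\bar q$. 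First I would record this splitting, reducing the proposition to two independent extremal problems, and check that the selected components reassemble, through a stationary non-classical jump $\hat u \to \check u$ of flow $\bar q \le Q_0$ at $x=0$, into the self-similar weak solution described in Definition~\ref{def:R2c}.

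The two extremal problems are governed by monotonicities read off from \eqref{H1}–\eqref{H3}. On the free side, \eqref{H1} gives $\lambda_{\rm f} = \Vfree + \rho\,\Vfree_{\rho} > 0$ on $[0, R_{\rm f}'']$, so the free flux $\rho \mapsto \rho\,\Vfree(\rho)$ is strictly increasing; consequently each flow level is attained by a unique free state, which—having positive characteristic speed—is automatically a stationary left trace for every $u_r$. On the congested side I would first establish $\lambda_1 = v - \rho\,p_{\rho} < 0$ throughout $\Omega_{\rm c}$ (for $\rho \in [R_{\rm f}', R_{\rm f}'']$ this is immediate from $v \le V_{\rm c} \le V_{\min} \le \Vfree(\rho) < \rho\,p_{\rho}$, using the monotonicity of $\Vfree$ in \eqref{H1} and \eqref{H3}); a direct computation then yields, along $\{Q = q\} \cap \Omega_{\rm c}$ parametrized by $v$, the identity $\d w/\d v = \lambda_1/v < 0$, and dually $\d Q/\d \rho = \lambda_1 < 0$ along $\{W = w\} \cap \Omega_{\rm c}$. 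The same negative sign makes the leftmost wave issued by any congested state a fan of negative speed, so that, when $u_r \in \Omega_{\rm f}$, no congested state can be a stationary left trace.

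These facts solve both problems explicitly. For $\hat u$: since $w$ is strictly decreasing in $v$ on $\{Q = \bar q\} \cap \Omega_{\rm c}$, and since the admissible arc is cut from below by $W_{\rm c}$, its minimum under $\hat w \ge w_\ell$ equals $\hat w = \max\{w_\ell,\ V_{\rm c} + p(\bar q/V_{\rm c})\}$, attained at $v = V_{\rm c}$ when the maximum is the second argument. Specializing $\bar q$ gives \mbox{($T^1_2$)}: with $\bar q = Q(u_{\rm c}(W_{\max}))$ one gets $V_{\rm c} + p(\bar q/V_{\rm c}) = W_{\max} \ge w_\ell$, whence $\hat u = u_{\rm c}(W_{\max})$; with $\bar q = Q_0$ one recovers the stated formula. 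For the regions $\mathcal{N}^2 \cup \mathcal{N}^4_2$, where $\bar q = Q_0$, I would use the dual monotonicity $\d Q/\d \rho = \lambda_1 < 0$—so that along $\{W = w_\ell\}$ the flow is maximal at $v = V_{\rm c}$—to deduce from the defining inequalities of these regions that $Q_0 \le Q(u_{\rm c}(w_\ell))$; this forces $V_{\rm c} + p(Q_0/V_{\rm c}) \le w_\ell$, the maximum collapses to $w_\ell$, and the upstream part of \mbox{($T^2_2$)} follows. For $\check u$: when $u_r \in \Omega_{\rm f}$ (regions $\mathcal{N}^1 \cup \mathcal{N}^3$) the stationary trace is the unique free state at flow $\bar q$, so $\check u \in \Omega_{\rm f}$, completing \mbox{($T^1_2$)}; when $u_r \in \Omega_{\rm c}$ (regions $\mathcal{N}^2 \cup \mathcal{N}^4_2$) the candidate is the $2$-contact state on $v = v_r$ at flow $Q_0$, which lies in $\Omega_{\rm c}$—and is stationary because $\lambda_2 = v_r > 0$—exactly when $v_r + p(Q_0/v_r) \ge W_{\rm c}$, i.e.\ $Q_0 \ge p^{-1}(W_{\rm c} - v_r)\,v_r$, giving $\check v = v_r$ and \mbox{($T^2_2$a)}; otherwise this state leaves $\Omega_{\rm c}$ and the unique admissible stationary trace is the free state of $\Omega_{\rm f}'$ at flow $Q_0$, giving \mbox{($T^2_2$b)}.

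It remains to verify existence and uniqueness of the two extremizers—the feasible arcs are nonempty and compact and the monotonicities are strict, so each optimum is attained and unique, and together they form the unique pair undominated in the sense of \eqref{eq:Rc2cond2}—and to dispose of the borderline configurations, namely $u_r$ on the boundaries $\rho_r = R_{\rm f}'$, $v_r = 0$ or $w_r \in \{W_{\rm c}, W_{\max}\}$ and the equality cases $Q_0 = Q(u_{\rm c}(W_{\max}))$ and $Q_0 = p^{-1}(W_{\rm c} - v_r)\,v_r$. I expect the principal difficulty to be twofold: proving $\lambda_1 < 0$ on the high-density part $\rho > R_{\rm f}''$ of $\Omega_{\rm c}$, which is not directly covered by \eqref{H3} yet is what makes the monotonicity identities global; and establishing the reduction $Q_0 \le Q(u_{\rm c}(w_\ell))$ on all of $\mathcal{N}^2 \cup \mathcal{N}^4_2$, which for $\mathcal{N}^2$ is immediate from flux monotonicity along $\{W = w_\ell\}$ but for $\mathcal{N}^4_2$ rests on a delicate comparison of flows across the free/congested interface. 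The exhaustive characterization of the stationarity set $\{\check u : \mathcal{R}_2[\check u, u_r](0^+) = \check u\}$ over all wave patterns of $\mathcal{R}_{\rm LWR}$ and $\mathcal{R}_{\rm ARZ}$ underlies both.
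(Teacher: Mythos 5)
A preliminary remark: the paper states this proposition \emph{without any proof} (its technical Section~4 proves only Propositions~\ref{pro:consistencyr1c} and~\ref{pro:inv-Rc}), so your attempt can only be judged against the statement itself. Your architecture is the natural one and most of it is correct: the conditions \eqref{eq:Rc2cond1} constrain $\hat u$ and $\check u$ independently, so \eqref{eq:Rc2cond2} indeed selects the pair minimizing $\hat w$ and maximizing $\check w$ at the common flow $\bar q$; the identities $\d w/\d v = \lambda_1/v$ on $\{Q = \bar q\} \cap \Omega_{\rm c}$ and $\d Q/\d\rho = \lambda_1$ on $\{W = w\} \cap \Omega_{\rm c}$ are exact; and the formula $\hat w = \max\{w_\ell,\,V_{\rm c} + p(\bar q/V_{\rm c})\}$ is what the conditions select. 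Your first flagged difficulty is real but can be closed: by \eqref{H2} the map $\rho \mapsto \rho^2 p_\rho(\rho)$ is increasing, so for $\rho > R_{\rm f}''$ one has $\rho\, p_\rho(\rho) > R_{\rm f}''\, p_\rho(R_{\rm f}'')\, R_{\rm f}''/\rho > V_{\min} R_{\rm f}''/\rho$ by \eqref{H3}, hence also $p(\rho) - p(R_{\rm f}'') > V_{\min}(1 - R_{\rm f}''/\rho)$; since every $u_{\rm c}(w)$ with $w \le W_{\max}$ satisfies $p(\rho_{\rm c}(w)) - p(R_{\rm f}'') \le V_{\min} - V_{\rm c}$, combining gives $V_{\rm c} < V_{\min} R_{\rm f}''/\rho_{\rm c}(w) < \rho_{\rm c}(w)\, p_\rho(\rho_{\rm c}(w))$, i.e.\ $\lambda_1(u_{\rm c}(w)) < 0$, and genuine nonlinearity propagates $\lambda_1 < 0$ along each $1$-Lax curve to all of $\Omega_{\rm c}$. (The same trick yields $Q(u_{\rm f}(w)) > Q(u_{\rm c}(w))$, which you implicitly need, and did not prove, to exclude congested stationary traces when $u_r \in \Omega_{\rm f}$: the leftmost wave may be trivial, and it is then the phase transition whose speed must be shown negative.)

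The genuine gap is your second flagged step, and it cannot be closed: the reduction $Q_0 \le Q(u_{\rm c}(w_\ell))$ is \emph{false} on part of $\mathcal{N}^4_2$, and on that part the conclusion ($T^2_2$a) is itself inconsistent with \eqref{eq:Rc2cond1}. Take $p(\rho) = \rho$, $\Vfree(\rho) = 1.09 - 0.1\rho$, $R_{\rm f}' = 1$, $R_{\rm f}'' = 1.2$ (then \eqref{H1}--\eqref{H3} hold and $V_{\min} = 0.97$, $W_{\rm c} = 1.99$, $W_{\max} = 2.17$), and $V_{\rm c} = 0.1$, $Q_0 = 0.2$, so that $Q(u_{\rm c}(W_{\rm c})) = 0.189 < Q_0 < Q(u_{\rm c}(W_{\max})) = 0.207$. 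Choose $u_\ell = (0.5,\,1.04) \in \Omega_{\rm f}'$ (so $w_\ell = W_{\rm c}$, $q_\ell = 0.52$) and $u_r = u_{\rm c}(W_{\max}) = (2.07,\,0.1)$ (so $q_r = 0.207$). Then $w_\ell \le w_r$ and $\min\{q_\ell, q_r\} = 0.207 > Q_0$, so $(u_\ell,u_r) \in \mathcal{N}^4_2$, and $p^{-1}(W_{\rm c}-v_r)\,v_r = 0.189 \le Q_0$ places us in case ($T^2_2$a), which prescribes $\hat q = Q_0$ and $\hat w = w_\ell$. But the maximal flow on $\{u \in \Omega_{\rm c} \colon W(u) = w_\ell\}$ is $Q(u_{\rm c}(w_\ell)) = 0.189 < Q_0$, so no $\hat u \in \Omega_{\rm c}$ satisfies both; conditions \eqref{eq:Rc2cond1}--\eqref{eq:Rc2cond2} actually select $\hat w = V_{\rm c} + p(Q_0/V_{\rm c}) = 2.1 > w_\ell$ (your max formula, as in ($T^1_2$b)), together with $\check u = (2,\,0.1)$, $\check v = v_r$. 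The root cause is the remark after Definition~\ref{def:LWR-ARZ}: the non-classical jumps of $\mathcal{R}_2$ do not conserve $W$, so membership in $\mathcal{N}^4_2$ only forces $q_r > Q_0$, a flow that is not capped by the curve $\{W = w_\ell\}$; by contrast, membership in $\mathcal{N}^4_1$ forces $p^{-1}(w_\ell - v_r)\,v_r > Q_0$ and hence $Q(u_{\rm c}(w_\ell)) > Q_0$, so the comparison you hoped for exists for $\ronec$ but not for $\rtwoc$. Any complete argument must therefore either show this subregion of $\mathcal{N}^4_2$ to be empty (it is not) or restate ($T^2_2$a) there with $\hat w = \max\{w_\ell,\, V_{\rm c} + p(Q_0/V_{\rm c})\}$; as written, neither your proof nor the proposition survives this configuration.
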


Moreover, by definition we immediately have the following

\begin{pro}\label{pro:Rush1}
For any $(u_\ell,u_r) \in \Omega^2$ we have that both $[(t,x) \mapsto \ronec[u_\ell,u_r](x/t)]$ and $[(t,x) \mapsto \rtwoc[u_\ell,u_r](x/t)]$ are weak solutions of \eqref{eq:model} and satisfy \eqref{eq:constraint} for a.e.~$t\in\R_+$.
Moreover, among the self similar weak solutions $u'$ to \eqref{eq:model} of the form \eqref{eq:Rc} and satisfying \eqref{eq:Rc1cond1}, $u \doteq \mathcal{R}_1^{\rm c}[u_\ell,u_r]$ is the only one that maximizes the flow through ${x=0}$, namely $Q(u'(t,0^\pm)) \le Q(u(t,0^\pm))$, with the equality holding if and only if $u' = u$.
\end{pro}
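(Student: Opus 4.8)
The plan is to treat the three assertions in turn, exploiting that $\ronec$ and $\rtwoc$ are obtained by gluing $\mathcal{R}_1$- (resp.\ $\mathcal{R}_2$-) solutions at $x=0$. To see that $[(t,x)\mapsto\ronec[u_\ell,u_r](x/t)]$ and $[(t,x)\mapsto\rtwoc[u_\ell,u_r](x/t)]$ are weak solutions, I would first dispose of the case $(u_\ell,u_r)\in\mathcal{C}_i$, where by Definition~\ref{def:R1c} and Definition~\ref{def:R2c} the constrained solver coincides with the unconstrained one: the weak-solution property is then inherited from \cite[Proposition~4.2]{BenyahiaRosini01} for $\mathcal{R}_1$, and for $\mathcal{R}_2$ it suffices to note that the only additional wave is the single shock of speed $\sigma(u_\ell,u_r)$, which conserves the mass flux by the very definition of $\sigma$. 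For $(u_\ell,u_r)\in\mathcal{N}_i$ the solution is a juxtaposition that, away from $x=0$ — and, in the three-piece formula for $\rtwoc$ on $\mathcal{N}^3$, away also from the phase transition of speed $\sigma(u_{\rm f}(w_\ell),\hat u)$ — is locally a restriction of an $\mathcal{R}_i$-solution and hence a weak solution there. It therefore remains only to verify the internal jumps: the constraint jump at $x=0$ between the traces $\hat u$ and $\check u$, for which the Rankine–Hugoniot relation $\hat q=\check q$ is exactly what \eqref{eq:Rc1cond1} (resp.\ \eqref{eq:Rc2cond1}) imposes, and the auxiliary phase transition at $\sigma(u_{\rm f}(w_\ell),\hat u)$, which conserves mass by definition of $\sigma$.

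Validity of \eqref{eq:constraint} is then immediate from self-similarity: the traces at $x=0^\pm$ do not depend on $t>0$ and equal $\hat u$, $\check u$, whence $Q(u(t,0^\pm))=\hat q=\check q$; by \eqref{eq:Rc1cond1} this common value is $\le Q_0$, and by \eqref{eq:Rc2cond1} it equals $\min\{Q_0,Q(u_{\rm c}(W_{\max}))\}\le Q_0$. Thus $Q(u(t,0^\pm))\le Q_0$ for every $t>0$, a fortiori for a.e.\ $t\in\R_+$.

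For the maximality statement I would exploit that any self-similar weak solution $u'$ of the form \eqref{eq:Rc} satisfying \eqref{eq:Rc1cond1} is completely determined by its pair of traces $(\hat u',\check u')\in\Omega_{\rm c}\times\Omega$, since then $u'=\mathcal{R}_1[u_\ell,\hat u']$ on $\{x<0\}$ and $u'=\mathcal{R}_1[\check u',u_r]$ on $\{x>0\}$; moreover, again by \eqref{eq:Rc1cond1}, its flow through the constraint is the common value $Q(u'(t,0^\pm))=\hat q'=\check q'$. Hence maximizing the flow over all such $u'$ amounts to maximizing $\hat q'$ over all admissible pairs, and the selection principle \eqref{eq:Rc1cond2} — which is part of the very definition of $\ronec$ and whose solvability is guaranteed by the two preceding propositions — states precisely that the pair $(\hat u,\check u)$ selected by $\ronec$ satisfies $\hat q'<\hat q$ for every distinct admissible competitor. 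This gives $Q(u'(t,0^\pm))\le Q(u(t,0^\pm))$, with equality forcing $(\hat u',\check u')=(\hat u,\check u)$ and therefore $u'=u$. The one delicate point is the weak-solution check at $x=0$: the constraint-induced stationary jump is a non-classical discontinuity which in general conserves only the mass flux and not the generalized momentum (indeed, when $\check u\in\Omega_{\rm c}$ with $\check v=v_r$ one typically has $\hat w\neq\check w$), so the verification must be carried out within the model's notion of admissible weak solution, for which continuity of $Q$ across $x=0$ together with the weak-solution property off $x=0$ is exactly what is required.
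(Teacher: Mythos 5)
Your proposal is correct and takes essentially the same route as the paper, which in fact gives no written proof at all: the proposition is prefaced by ``by definition we immediately have,'' and your argument is exactly the careful unpacking of that claim from Definitions~\ref{def:R1c} and~\ref{def:R2c} — gluing of $\mathcal{R}_i$-solutions away from $x=0$, the mass Rankine--Hugoniot relation and the constraint encoded in \eqref{eq:Rc1cond1}/\eqref{eq:Rc2cond1}, and maximality with uniqueness encoded in \eqref{eq:Rc1cond2}. Your closing caveat, that the stationary jump at $x=0$ (and the phase transitions of $\mathcal{R}_2$) need not conserve the generalized momentum and must be read within the model's notion of weak solution inherited from \cite{BCM2order, BenyahiaRosini01}, is consistent with the paper's own remark following Definition~\ref{def:LWR-ARZ}.
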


It is easy to prove that in general both $\mathcal{R}_1^{\rm c}$ and $\mathcal{R}_2^{\rm c}$ fail to be consistent.

\begin{pro}\label{pro:cons}
In general, both $\ronec$ and $\rtwoc$ satisfy neither \eqref{P2} nor \eqref{P3} in $\Omega$.
\end{pro}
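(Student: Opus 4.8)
The plan is to prove this negative statement by exhibiting explicit counterexamples: for each solver and each requirement we produce a triple $(u_\ell,u_m,u_r)\in\Omega^3$ and a point $\bar x\in\R$ for which the hypothesis of \eqref{P2} (resp.\ \eqref{P3}) holds but its conclusion fails. The conceptual obstruction is that the point constraint singles out the fixed location $x=0$: the non-classical states $\hat u,\check u$ of \eqref{eq:Rc}, \eqref{eq:Rc1cond1} are always placed at $x=0$, whereas \eqref{P2} and \eqref{P3} demand that the solver commute with splitting/gluing at an \emph{arbitrary} $\bar x$. Whenever an intermediate state $u_m$ of $\ronec[u_\ell,u_r]$ sits at some $\bar x\neq0$ and, read as a new datum, lies in $\mathcal N_1$, the re-solved sub-problem re-inserts its jump at $x=0$ rather than at $\bar x$, and the matching identities break.

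First I would refute \eqref{P2} for $\ronec$. Take $(u_\ell,u_r)\in\mathcal N^2$ with $Q_0<p^{-1}(W_{\rm c}-v_r)\,v_r$, so that the selection \eqref{eq:Rc1cond1} forces $\check u\in\Omega_{\rm f}'$; then $w_{\check u}=W_{\rm c}$ and the right component $\mathcal R_1[\check u,u_r]$ of \eqref{eq:Rc} is a phase transition that \emph{re-enters} $\Omega_{\rm c}$, producing the constant congested state $u_m\doteq(p^{-1}(W_{\rm c}-v_r),v_r)$ on the self-similar interval $\bigl(\sigma(\check u,u_m),v_r\bigr)\subset(0,+\infty)$, whose flow is $p^{-1}(W_{\rm c}-v_r)\,v_r>Q_0$. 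Picking $\bar x$ in this interval gives $\ronec[u_\ell,u_r](\bar x)=u_m$, i.e.\ the hypothesis of \eqref{P2}. Since $W(u_m)=W_{\rm c}$, one has $(u_m,u_r)\in\mathcal N^2$, so $\ronec[u_m,u_r]$ re-inserts a jump at $x=0<\bar x$, landing in $\Omega_{\rm f}'$ immediately to the right of $0$; hence $\ronec[u_m,u_r](x)\neq u_m$ for $x\in(0,\bar x)$, contradicting the second identity of \eqref{P2}. The same computation (with $u_r$ on the curve $w_r=W_{\rm c}$, so that $\mathcal R_2[\check u,u_r]=\mathcal R_1[\check u,u_r]$ still re-congests, cf.\ Definition~\ref{def:R2c}) refutes \eqref{P2} for $\rtwoc$, which on $\mathcal N^2$ likewise re-inserts a jump at $x=0$.

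To refute \eqref{P3} for $\ronec$ I would use the dual effect: arrange that a \emph{sub}-problem is constrained while the \emph{full} problem is not, so that the glued function inherits a non-classical jump at $x=0$ absent from $\ronec[u_\ell,u_r]$. Concretely, take $u_\ell\in\Omega_{\rm f}$ with $q_\ell>Q_0$, a free state $u_m$ attained at $\bar x=\lambda_{\rm f}(u_m)>0$ as the common (rarefaction) edge of both pieces, and $u_r\in\Omega_{\rm c}$ with $p^{-1}(w_\ell-v_r)\,v_r\le Q_0$. Then $(u_\ell,u_r)\in\mathcal C_1^4$ is unconstrained, with $\ronec[u_\ell,u_r](0^+)$ a congested state of flow $\le Q_0$, whereas $(u_\ell,u_m)\in\mathcal N^1$ is constrained, with $\ronec[u_\ell,u_m](0^+)=\check u\in\Omega_{\rm f}$; the two differ, so the glued solution disagrees with $\ronec[u_\ell,u_r]$ on $(0,\bar x)$. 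For $\rtwoc$ the failure of \eqref{P3} is cheaper: on $\mathcal C_2$ one has $\rtwoc=\mathcal R_2$, and $\mathcal R_2$ already violates \eqref{P3} for the triple singled out after Definition~\ref{def:LWR-ARZ} (with $u_\ell\in\Omega_{\rm f}''$, $u_m,u_r\in\Omega_{\rm c}$, $w_\ell=w_m<w_r$, $v_m=v_r$); it suffices to choose $v_r$ small so that $q_m=\rho_m v_r\le Q_0$ and $\min\{q_\ell,q_r\}\le Q_0$, which keeps all three problems in $\mathcal C_2$ and transfers the inconsistency verbatim.

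I expect the main obstacle to be verifying the \emph{hypotheses} rather than the conclusions. For each counterexample one must certify the triggering trace equalities $\ronec[\cdot,\cdot](\bar x)=u_m$, which forces control of the wave speeds: that the relevant wave has a genuine interior so that $\bar x$ exists strictly on one side of $0$, and that the phase-transition and contact speeds place $u_m$ where claimed, e.g.\ $\sigma(\check u,u_m)<v_r$ in the \eqref{P2} construction and $\lambda_{\rm f}(u_m)$ below the leading speed of $[u_m,u_r]$ in the \eqref{P3} one. One must also confirm the class membership ($\mathcal N^2$ vs $\mathcal C^2$, $\mathcal N^1$ vs $\mathcal C_1^4$) of every sub-problem, the realizability of all states in $\Omega$, and the strict inequalities used ($q_\ell>Q_0$, $Q_0<p^{-1}(W_{\rm c}-v_r)\,v_r$, $p^{-1}(w_\ell-v_r)\,v_r\le Q_0$); these are secured by choosing $\Vfree$, $p$, $Q_0$ and the data suitably, for instance with the explicit laws of Section~\ref{sec:2}. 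Assembling these verifications yields the explicit data refuting \eqref{P2} and \eqref{P3} for both $\ronec$ and $\rtwoc$.
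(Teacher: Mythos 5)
Your treatment of \eqref{P2} is correct for both solvers: I checked the wave structure of your $\mathcal{N}^2$ example (with $\check{u}\in\Omega_{\rm f}'$ and the re-congesting middle state $u_m=(p^{-1}(W_{\rm c}-v_r),v_r)$ of flow $>Q_0$), and the contradiction with the second identity of \eqref{P2} is genuine; the $w_r=W_{\rm c}$ variant for $\rtwoc$ also works. It is, however, much heavier than the paper's argument, which disposes of \eqref{P2} for both solvers at once: take $u_m=u_r$ with $q_r>Q_0$; by finite speed of propagation $\ronec[u_\ell,u_r](\bar{x})=u_r$ for $\bar{x}$ large, and the conclusion of \eqref{P2} would force $\ronec[u_r,u_r]\equiv u_r$ on $\{x<\bar{x}\}$, contradicting the fact that $\ronec[u_r,u_r]$ satisfies \eqref{eq:constraint} (Proposition~\ref{pro:Rush1}). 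Your refutation of \eqref{P3} for $\rtwoc$ --- choosing $v_r$ small so that all three Riemann data lie in $\mathcal{C}_2$ and the known inconsistency of $\mathcal{R}_2$ transfers verbatim --- is correct and is a genuinely different route from the paper, which instead uses a single configuration ($u_\ell,u_r\in\Omega_{\rm f}$, $u_m\in\Omega_{\rm c}$, $w_\ell=w_m$, $Q(u_m)=Q_0=Q(u_r)$) for both solvers.

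The gap is in your \eqref{P3} counterexample for $\ronec$: its hypothesis can never be satisfied, so no contradiction follows. With $u_\ell,u_m\in\Omega_{\rm f}$, $u_m\neq u_\ell$, and $u_r\in\Omega_{\rm c}$, the condition $\ronec[u_\ell,u_m](\bar{x})=u_m$ forces $\bar{x}\geq\lambda_{\rm f}(u_m)$: by concavity of the free flux (from \eqref{H1}), the fastest wave of any LWR shock or fan ending at $u_m$ (whether or not a constraint jump sits behind it at $x=0$) has speed at least $\lambda_{\rm f}(u_m)$. On the other hand, $\ronec[u_m,u_r](\bar{x})=u_m$ forces $\bar{x}$ below the leading wave issuing from $u_m$; if $(u_m,u_r)\in\mathcal{N}_1^4$ that wave has nonpositive speed, and if $(u_m,u_r)\in\mathcal{C}_1^4$ it is the phase transition of speed $\sigma(u_m,u')$, $u'=(p^{-1}(w_m-v_r),v_r)$. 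But $\sigma(u_m,u')<\lambda_{\rm f}(u_m)$ always: by \eqref{H2} the curve $\rho\mapsto\rho\,[w_m-p(\rho)]$ through $u'$ is strictly concave, and by \eqref{H3} its slope where it leaves the free branch is $\lambda_1<\lambda_{\rm f}$, so $u'$ lies strictly below the tangent of slope $\lambda_{\rm f}(u_m)$ at $u_m$, which is exactly the reversed inequality. Hence the two trace requirements are incompatible for every $\bar{x}$. This is precisely the verification you flagged as the ``main obstacle'' ($\lambda_{\rm f}(u_m)$ below the leading speed of $[u_m,u_r]$), and it cannot be secured by any choice of $\Vfree$, $p$, $Q_0$: under \eqref{H1}--\eqref{H3} the inequality goes the wrong way, so this construction must be abandoned, not completed. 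The natural repair of taking $u_m=\check{u}$ (so that $\ronec[u_\ell,u_m]\equiv u_m$ on $x>0$ and the hypothesis holds for small $\bar{x}>0$) does not rescue it either, because the non-classical trace $(\hat{u},\check{u})$ of $\ronec$ depends only on $w_\ell$, $Q_0$ (and $v_r$), data which such a middle state transmits unchanged, so the glued and the direct solutions then coincide. This structural rigidity is what makes violating \eqref{P3} for $\ronec$ delicate, and it is why the paper's example is built around a \emph{congested} middle state on the curve $w=w_\ell$ carrying exactly the flow $Q_0$, for which both trace conditions of \eqref{P3} hold on a whole interval of negative $\bar{x}$.
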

\begin{proof}
For any $u_\ell, u_m, u_r \in \Omega$ such that $u_m = u_r$ and $q_r >Q_0$, by the finite speed of propagation of the waves, there exists $\bar{x} >0$ such that $\ronec[u_\ell,u_r](\bar{x}) = \rtwoc[u_\ell,u_r](\bar{x}) = u_r$. Then the property $\ronec[u_r,u_r](x) = \rtwoc[u_r,u_r](x) = u_r$ for any $x < \bar{x}$ required in \eqref{P2} cannot be satisfied because otherwise $Q(\ronec[u_r,u_r](0^\pm)) = Q(\rtwoc[u_r,u_r](0^\pm)) = q_r > Q_0$ and this gives a contradiction because $\ronec[u_r,u_r]$ and $\rtwoc[u_r,u_r]$ satisfy \eqref{eq:constraint}, see Proposition~\ref{pro:Rush1}.
Moreover, if $Q_0\in [Q(u_{\rm c}(W_{\rm c})), Q(u_{\rm c}(W_{\max}))]$, then we can take $u_\ell, u_r \in \Omega_{\rm f}$, $u_m \in \Omega_{\rm c}$ with $w_\ell = w_m$ and $Q(u_m) = Q_0 = Q(u_r)$, and see that \eqref{P3} is not satisfied by both $\mathcal{R}_1^{\rm c}$ and $\mathcal{R}_2^{\rm c}$.
\end{proof}

We conclude this section by considering the total variation of the two constrained Riemann solvers in the Riemann invariant coordinates.
We provide two examples showing that in general the comparison of their total variation can go in both ways. 
This suggests that the total variation is not a relevant selection criteria for choosing a wave-front tracking algorithm based on one or the other Riemann solver.

\begin{example}
If there exist $\bar{u}, u^* \in \Omega_{\rm f}$ and $Q_0$ such that $Q(u_{\rm f}(W_{\rm c})) < Q(u^*) = Q(u_{\rm c}(W(\bar{u}))) < Q_0 < Q(\bar{u}) < Q(u_{\rm c}(W_{\max}))$ and $W(\bar{u}) - W(u^*) > \check{w}_2 - W(u_0)$, where $u_0 \in \Omega_{\rm f}$ is implicitly defined by $Q(u_0) = Q_0$ and $\check{w}_2 \doteq V_{\rm c} + p\left(Q_0/V_{\rm c}\right)$, then $\tv(V \circ \ronec[\bar{u},u_r])= 2[V(u^*) - V_{\rm c}] > \tv(V \circ \rtwoc[\bar{u},u_r])= 2[V(u_0) - V_{\rm c}]$
and $\tv(W \circ \ronec[\bar{u},u_r])= 2[W(\bar{u}) - W(u^*)] > \tv(W \circ \rtwoc[\bar{u},u_r])= 2[\check{w}_2 - W(u_0)]$.
\end{example}

\begin{example}
If there exist $(u_\ell , u_r) \in \Omega_{\rm c} \times \Omega_{\rm f}$ and $Q_0$ such that $v_\ell = V_{\rm c}$ and $Q(u_{\rm f}(W_{\rm c})) < Q(u_\ell) = Q(u_r) < Q_0 < Q(u_{\rm f}(w_\ell)) < Q(u_{\rm c}(W_{\max}))$, then $\tv(V \circ \ronec[u_\ell,u_r]) = v_r - V_{\rm c}<\tv(V \circ \rtwoc[u_\ell,u_r]) = v_r + 2 V(u_{\rm f}(w_\ell)) - 3 V_{\rm c}$ and $\tv(W \circ \ronec[u_\ell,u_r]) = w_\ell - w_r < \tv(W \circ \rtwoc[u_\ell,u_r]) = 2 \check{w}_2  - w_\ell - w_r$, where $\check{w}_2 \doteq V_{\rm c} + p\left(Q_0/V_{\rm c}\right)$.
\end{example}

\section{Basic properties of the constrained Riemann solvers}\label{sec:3}

\subsection{Basic properties of \texorpdfstring{$\ronec$}{}}

\begin{pro}\label{pro:consistencyr1c}
$\ronec$ is consistent in the invariant domain $\mathcal{D}_1 \doteq \{u \in \Omega \colon  Q(u) \leq Q_0 \}$.
Moreover, if $\mathcal{D}$ is an invariant domain and is not contained in $\mathcal{D}_1$, then $\ronec$ is not consistent in $\mathcal{D}$.
\end{pro}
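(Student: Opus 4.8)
I plan to unfold \emph{consistency} into its two defining implications \eqref{P2} and \eqref{P3}, and to lean on the fact that $\mathcal{R}_1$ is already $\Lloc1$-continuous and consistent in all of $\Omega$, see \cite[Proposition~4.2]{BenyahiaRosini01}; the whole difficulty is then confined to the pairs on which $\ronec$ truly departs from $\mathcal{R}_1$. Before that I would settle invariance. The free waves keep the flux between their end values because $Q$ is monotone on $\Omega_{\rm f}$, while on $\Omega_{\rm c}$ the flux is non-decreasing in $v$ along each curve $\{W=w\}$ (equivalently $\lambda_1\le0$ there) and in $\rho$ along each contact $\{v=\mathrm{const}\}$; moreover $Q(u_{\rm c}(w))<Q(u_{\rm f}(w))$, which follows from \eqref{H3} since $\lambda_1(u_{\rm f}(w))=\Vfree(\rho_{\rm f})-\rho_{\rm f}\,p_\rho(\rho_{\rm f})<0$. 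Feeding in the selected states $\hat u,\check u$, and noting that within $\mathcal{D}_1$ the subcase $\check u\in\Omega_{\rm f}'$ is vacuous (because $u_r\in\Omega_{\rm c}\cap\mathcal{D}_1$ forces $Q_r\ge p^{-1}(W_{\rm c}-v_r)\,v_r$, incompatible with $Q_0<p^{-1}(W_{\rm c}-v_r)\,v_r$), these monotonicities bound the flux of $\ronec[u_\ell,u_r]$ by $Q_0$ throughout, so $\mathcal{D}_1$ is invariant.

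Next I would use the partition $\mathcal{C}_1=\mathcal{C}^1\cup\mathcal{C}^2\cup\mathcal{C}^3\cup\mathcal{C}_1^4$ to cut down the cases. Since $u_\ell\in\mathcal{D}_1$ means exactly $q_\ell\le Q_0$, every free--free pair lies in $\mathcal{C}^1$ and every free--congested pair lies in $\mathcal{C}_1^4$ (because $\min\{q_\ell,\cdot\}\le q_\ell\le Q_0$); on such pairs $\ronec=\mathcal{R}_1$ and \eqref{P2}, \eqref{P3} are inherited verbatim from the consistency of $\mathcal{R}_1$. Hence the only pairs of $\mathcal{D}_1^2$ on which $\ronec\neq\mathcal{R}_1$ are the congested-origin pairs, $u_\ell\in\Omega_{\rm c}$, lying in $\mathcal{N}^2$ or $\mathcal{N}^3$, and I would treat these head on.

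For such a pair, $\ronec[u_\ell,u_r]$ is an $\mathcal{R}_1$-wave from $u_\ell$ to $\hat u$ along $\{W=w_\ell\}$ for $x<0$, a stationary constraint jump $\hat u\to\check u$ at $x=0$ with $\hat q=\check q\le Q_0$, and an $\mathcal{R}_1$-wave from $\check u$ to $u_r$ for $x>0$, with $(\hat u,\check u)$ dictated by cases $T^1_1$, $T^2_1$. Splitting this profile at a ray $\bar x$, each sub-Riemann problem either falls back into $\mathcal{C}_1$---a sub-fan $[u_\ell,u_m]$ of the left wave or a piece $[u_m,u_r]$ of the right wave, where $\ronec=\mathcal{R}_1$ and consistency of $\mathcal{R}_1$ closes the case---or is again a congested-origin $\mathcal{N}$-pair sharing the \emph{same} $w_\ell$ (respectively the same $v_r$), so by the uniqueness in the selection proposition it reselects the very same $\hat u,\check u$; either way the three profiles align as \eqref{P2} demands, and the symmetric bookkeeping yields \eqref{P3}. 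The point that makes this work inside $\mathcal{D}_1$---and fails outside it---is that the trivial datum $[u,u]$ is solved by the constant $u$ for every $u\in\mathcal{D}_1$, its flux $q\le Q_0$ already meeting \eqref{eq:constraint}; thus duplicating a state never forces an extraneous constraint jump, which is precisely the mechanism behind the inconsistency of Proposition~\ref{pro:cons}.

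That same mechanism gives the converse. If $\mathcal{D}$ is invariant and $\mathcal{D}\not\subseteq\mathcal{D}_1$, choose $\bar u\in\mathcal{D}$ with $Q(\bar u)>Q_0$ and test \eqref{P2} on $u_\ell=u_m=u_r=\bar u$: by finite propagation speed there is $\bar x>0$ with $\ronec[\bar u,\bar u](\bar x)=\bar u$, whereas \eqref{P2} would force $\ronec[\bar u,\bar u](x)=\bar u$ for all $x<\bar x$ and hence $Q(\ronec[\bar u,\bar u](0^\pm))=Q(\bar u)>Q_0$, contradicting that $\ronec[\bar u,\bar u]$ obeys \eqref{eq:constraint} by Proposition~\ref{pro:Rush1}. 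So $\ronec$ is inconsistent in $\mathcal{D}$. I expect the genuine obstacle to be the congested-origin analysis of the third paragraph---above all, checking that splitting a profile across the stationary jump at $x=0$ reproduces exactly the selected states $\hat u,\check u$---whereas invariance and the converse are comparatively routine.
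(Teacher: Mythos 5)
Your overall skeleton is the paper's: you reduce everything to the consistency of $\mathcal{R}_1$ on pairs where $\ronec = \mathcal{R}_1$, you handle the genuinely constrained pairs (which, inside $\mathcal{D}_1$, are exactly the pairs in $\mathcal{N}^2 \cup \mathcal{N}^3$, as you correctly observe) by splitting the profile at $\bar x$ and invoking the uniqueness of the selected pair $(\hat{u},\check{u})$, which depends only on $w_\ell$ (and $v_r$), and you obtain the maximality statement from the mechanism of Proposition~\ref{pro:cons} (finite speed of propagation plus the fact that $\ronec$ obeys \eqref{eq:constraint}, Proposition~\ref{pro:Rush1}). Your treatment of \eqref{P2} and of the converse coincides in substance with the paper's.

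The genuine gap is \eqref{P3}, which you dismiss as ``symmetric bookkeeping.'' It is not symmetric to \eqref{P2}: in \eqref{P2} the state $u_m$ is by hypothesis a value of the known profile $\ronec[u_\ell,u_r]$, so the splitting argument applies; in \eqref{P3} you are handed only the two trace conditions $\ronec[u_\ell,u_m](\bar x) = u_m = \ronec[u_m,u_r](\bar x)$, and $u_m$ is \emph{not} a priori on the profile of $\ronec[u_\ell,u_r]$ — indeed, a priori it need not even be in the same phase as the profile's values at $\bar x$. In particular, your claim that on free--free and free--congested pairs consistency is ``inherited verbatim'' breaks down here: for $u_\ell, u_r \in \Omega_{\rm f} \cap \mathcal{D}_1$ and $u_m \in \Omega_{\rm c} \cap \mathcal{D}_1$ with $Q(u_{\rm f}(W(u_m))) > Q_0$, the sub-pair $(u_m,u_r)$ lies in $\mathcal{N}^3$, so $\ronec[u_m,u_r] \neq \mathcal{R}_1[u_m,u_r]$ and nothing is inherited. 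The paper's proof of \eqref{P3} therefore spends its effort on first \emph{deducing} the phase and the Riemann invariants of $u_m$ from the two trace hypotheses — $u_m \in \Omega_{\rm f}$ when $u_\ell,u_r \in \Omega_{\rm f}$; $w_m = w_\ell$ and $v_m = v_r$ when $u_\ell,u_r \in \Omega_{\rm c}$; $u_m \in \Omega_{\rm f}$ with $q_m = Q(\hat{u})$, $\hat{w} = w_\ell$, $\hat{v} = V_{\rm c}$ when $(u_\ell,u_r) \in \Omega_{\rm c} \times \Omega_{\rm f}$ — deductions that rest on wave-speed orderings (e.g.\ a $2$-contact of speed $v_m$ cannot sit to the left of a $1$-wave emanating from $u_m$, whose speed is at most $\lambda_1(u_m) < v_m$), and only afterwards reduces to the consistency of $\mathcal{R}_1$, or argues directly in the $\Omega_{\rm c}\times\Omega_{\rm f}$ case where the glued solution still contains the constraint jump. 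This analysis is missing from your proposal, and your closing sentence even locates the difficulty in the wrong half: the splitting across the stationary jump is the easy part. A minor further point: your invariance argument asserts that $Q$ is non-decreasing in $v$ along each curve $\{W = w\}$ in $\Omega_{\rm c}$, i.e.\ $\lambda_1 \le 0$ there; this is true under \eqref{H1}--\eqref{H3} but is not obvious (for $\rho > R_{\rm f}''$ it requires combining \eqref{H2} with $W \le W_{\max}$), so it would need a proof rather than a parenthesis.
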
\noindent
The proof is rather technical and is therefore deferred to Section~\ref{sec:consistencyr1c}.

\begin{pro}\label{pro:continuity}
$\mathcal{R}_1^{\rm c}$ is $\Lloc1$-continuous in $\Omega^2$ if and only if $Q_0 \le Q(u_{\rm c}(W_{\rm c}))$.
If $Q_0 > Q(u_{\rm c}(W_{\rm c}))$, then $\mathcal{R}_1^{\rm c}$ is $\Lloc1$-continuous in $\Omega^2 \setminus(\mathcal{C} \cap \overline{\mathcal{N}^1})$ and is not $\Lloc1$-continuous in any point of $\mathcal{C} \cap \overline{\mathcal{N}^1}$.
\end{pro}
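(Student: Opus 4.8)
The plan is to use that the $\Lloc1$-continuity of a self-similar Riemann solver is equivalent to the continuous dependence of its wave fan---the intermediate states together with the wave speeds---on the datum $(u_\ell,u_r)$. Since $\ronec$ equals $\mathcal{R}_1$ on $\mathcal{C}_1$, which is $\Lloc1$-continuous by \cite[Proposition~4.2]{BenyahiaRosini01}, and on $\mathcal{N}_1$ is the juxtaposition of $\mathcal{R}_1[u_\ell,\hat{u}]$ on $\{x<0\}$ and $\mathcal{R}_1[\check{u},u_r]$ on $\{x>0\}$, I would reduce the statement to two points: the selection $(u_\ell,u_r)\mapsto(\hat{u},\check{u})$ is continuous on the interior of each of $\mathcal{N}^1,\mathcal{N}^2,\mathcal{N}^3,\mathcal{N}^4_1$, and the one-sided limits of $\ronec$ from $\mathcal{C}_1$ and from $\mathcal{N}_1$ agree along the shared boundary $\partial\mathcal{C}_1=\partial\mathcal{N}_1$.

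The interior continuity follows by inspecting the explicit description of $(\hat{u},\check{u})$ provided by the preceding proposition: in each subcase the quantities $\hat{w},\hat{q},\check{q}$ and the trace conditions \eqref{eq:Rc1cond1} pin down $(\hat{u},\check{u})$ through the continuous maps $w\mapsto u_{\rm c}(w)$, $w\mapsto u_{\rm f}(w)$ and $p^{-1}$, so $(\hat{u},\check{u})$ depends continuously on $(u_\ell,u_r)$ there; composing with the continuity of $\mathcal{R}_1$, $\mathcal{R}_{\rm LWR}$ and $\mathcal{R}_{\rm ARZ}$ gives continuity of $\ronec$ on the interiors.

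The crux is the boundary matching, and this is where $Q(u_{\rm c}(W_{\rm c}))$ enters. The decisive alternative is between the \emph{non-standard} selection ($T^1_1$a), where $\hat{q}=\check{q}=Q(u_{\rm c}(w_\ell))<Q_0$ and a congested state $u_{\rm c}(w_\ell)$ occupies the interface, and every other selection, where $\hat{q}=\check{q}=Q_0$. In the latter cases the interface flux equals $Q_0$, which is exactly the value of $Q(\mathcal{R}_1[\,\cdot\,](0^\pm))$ on the $\mathcal{C}_1$-side of the boundary, so interior continuity forces the two one-sided limits to coincide and $\ronec$ is continuous across those parts of $\partial\mathcal{N}_1$. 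In case ($T^1_1$a) the constrained flux instead drops to $Q(u_{\rm c}(w_\ell))<Q_0$, whereas the unconstrained solution on the $\mathcal{C}_1$-side carries flux $Q_0$ at the interface; the two limits then differ on a set of positive measure and continuity breaks down. On the free--free interface $\{q_\ell=Q_0\}\subseteq\Omega_{\rm f}^2$ with $u_\ell\in\Omega_{\rm f}'$ one has $w_\ell=W_{\rm c}$, so ($T^1_1$a) is triggered precisely when $Q_0>Q(u_{\rm c}(W_{\rm c}))$; this both fixes the threshold and locates the set $\mathcal{C}\cap\overline{\mathcal{N}^1}$ of potential failures. For necessity I would, given any $(u_\ell,u_r)\in\mathcal{C}\cap\overline{\mathcal{N}^1}$, construct a sequence in $\mathcal{N}^1$ with $u^n_\ell\in\Omega_{\rm f}'$ and $q^n_\ell\downarrow Q_0$ whose solutions keep the congested plateau of flux $Q(u_{\rm c}(W_{\rm c}))$ and hence cannot converge to the purely free $\mathcal{R}_{\rm LWR}[u_\ell,u_r]$; conversely, if $Q_0\le Q(u_{\rm c}(W_{\rm c}))$ then ($T^1_1$a) never occurs and $\ronec$ is continuous on all of $\Omega^2$.

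The step I expect to be the main obstacle is the exhaustive reconciliation of the one-sided limits along the remaining boundary components, and hence the proof that the discontinuity set is \emph{exactly} $\mathcal{C}\cap\overline{\mathcal{N}^1}$. The non-standard selection ($T^1_1$a) is active near $\partial\mathcal{N}^3$ as well, so one must follow the whole wave fan of $\mathcal{R}_1$ on $\Omega_{\rm c}\times\Omega_{\rm f}$---in particular the sign of the phase-transition speed and of $\lambda_1$ at $u_{\rm c}(w_\ell)$, governed by the monotonicity of the flux along the level sets $\{W=w\}$ from \eqref{H2}--\eqref{H3} together with $\lambda_{\rm f}>0$ from \eqref{H1}---to decide how the traces from the two sides compare there. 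Carrying out this case analysis, and handling throughout the $\Omega_{\rm f}'$ versus $\Omega_{\rm f}''$ split that determines the value of $w_\ell$ at the interface, is where the delicate work lies.
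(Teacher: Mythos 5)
Your necessity argument is essentially the paper's proof of the negative part: the paper fixes $u_0\in\Omega_{\rm f}$ with $Q(u_0)=Q_0$, takes $u_\ell=u_0$, $u_r=(R_{\rm f}'',V_{\min})$, $\rho_\ell^n=\rho_0+1/n$, and observes that by ($T^1_1$a) the solutions $\ronec[u_\ell^n,u_r]$ keep a congested plateau on $(\sigma(u_\ell,u_{\rm c}(w_\ell)),0)$, hence cannot converge in $\Lloc1$ to $\ronec[u_\ell,u_r]=\mathcal{R}_1[u_\ell,u_r]\equiv u_\ell$ on $\R_-$. (One slip on your side: you insist on $u_\ell^n\in\Omega_{\rm f}'$, i.e.\ $w_\ell^n=W_{\rm c}$; when $Q_0\ge Q(u_{\rm f}(W_{\rm c}))$ the points of $\mathcal{C}\cap\overline{\mathcal{N}^1}$ have $u_\ell\in\Omega_{\rm f}''$, and the plateau is $u_{\rm c}(w_\ell)$ rather than $u_{\rm c}(W_{\rm c})$; the mechanism is unchanged.) For the positive part the paper argues differently from you: pointwise convergence of $\mathcal{R}_1[u_\ell^n,\hat u^n]$ on $\{x<0\}$ and of $\mathcal{R}_1[\check u^n,u_r^n]$ on $\{x>0\}$ plus dominated convergence, resting on the dichotomy that either $\hat u^n\to\mathcal{R}_1[u_\ell,u_r](0^-)$ or $\sigma(u_\ell^n,\hat u^n)\to0$.

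The genuine gap is exactly the step you defer as ``the main obstacle'': the matching along the boundary of $\mathcal{N}^3$. This is not routine bookkeeping, because your own flux-drop criterion fires there \emph{against} the statement you are proving. Since $\lambda_1<0$ on $\Omega_{\rm c}$ (a consequence of \eqref{H2}--\eqref{H3}), $Q$ is strictly decreasing along $\{W(u)=w\}\cap\Omega_{\rm c}$, so $Q(u_{\rm c}(w))<Q(u_{\rm f}(w))$ whenever $v_{\rm f}(w)>V_{\rm c}$ (true, e.g., for the paper's explicit choices of $\Vfree$ and $p$). Pick $w^*<W_{\max}$ with $Q(u_{\rm f}(w^*))=Q_0$ --- possible precisely when $Q_0\ge Q(u_{\rm f}(W_{\rm c}))$, a range compatible with $Q_0>Q(u_{\rm c}(W_{\rm c}))$ --- then take $u_\ell\in\Omega_{\rm c}$ with $w_\ell=w^*$ and any $u_r\in\Omega_{\rm f}$. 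Then $(u_\ell,u_r)\in\mathcal{C}^3\subseteq\mathcal{C}_1$ and $(u_\ell,u_r)\notin\overline{\mathcal{N}^1}$, yet for $w_\ell^n=w^*+1/n$ one has $(u_\ell^n,u_r)\in\mathcal{N}^3$ with ($T^1_1$a) active for $n$ large: $\ronec[u_\ell^n,u_r]$ has the plateau $u_{\rm c}(w_\ell^n)$ adjacent to $x=0^-$ and a downstream free state of flux $Q(u_{\rm c}(w_\ell^n))<Q_0$, while $\ronec[u_\ell,u_r]=\mathcal{R}_1[u_\ell,u_r]$ equals $u_{\rm f}(w^*)$ on the nonempty interval $(\sigma(u_{\rm c}(w^*),u_{\rm f}(w^*)),0)$, the phase-transition speed being negative. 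The two limits differ on a set of positive measure, so by your own criterion this is a discontinuity point \emph{outside} $\mathcal{C}\cap\overline{\mathcal{N}^1}$: carried out honestly, your route contradicts the localization of the discontinuity set it is meant to establish, and nothing in your proposal rules such points out. Note also that this sequence falls in neither horn of the paper's dichotomy ($\hat u^n\to u_{\rm c}(w^*)\neq u_{\rm f}(w^*)=\mathcal{R}_1[u_\ell,u_r](0^-)$, and the $1$-rarefaction speeds stay bounded away from zero), so the deferred reconciliation cannot be closed by simply invoking the paper's argument either; the $\Omega_{\rm c}\times\Omega_{\rm f}$ boundary data require a dedicated treatment (or an explicit restriction such as $Q_0<Q(u_{\rm f}(W_{\rm c}))$), and that is missing from your proposal.
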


\begin{proof}
Assume that $Q_0 > Q(u_{\rm c}(W_{\rm c}))$ and let $u_0 \in \Omega_{\rm f}$ be such that $Q(u_0) = Q_0$.
Then it suffices to take $u_\ell = u_0$, $u_r = (R_{\rm f}'',V_{\min})$ and $u_\ell^n \in \Omega_{\rm f}$ with $\rho_\ell^n \doteq \rho_0+1/n$.
Indeed in this case $(u_\ell^n)_n$ converges to $u_\ell$ but $\mathcal{R}_1^{\rm c}[u_\ell^n,u_r]$ does not converge to $\mathcal{R}_1^{\rm c}[u_\ell,u_r]$ in $\Lloc1(\R;\Omega)$.
More precisely, $\mathcal{R}_1^{\rm c}[u_\ell,u_r] \equiv u_\ell$ in $\R_-$ and by \ref{a} the restriction of $\mathcal{R}_1^{\rm c}[u_\ell^n,u_r]$ to $\R_-$ converges to
\[
\begin{cases}
u_\ell&\text{if }x<\sigma(u_\ell,u_{\rm c}(w_\ell)),\\
u_{\rm c}(W_{\rm c})&\text{if }\sigma(u_\ell,u_{\rm c}(w_\ell)) < x < 0.
\end{cases}
\]

Proving that $\mathcal{R}_1^{\rm c}$ is $\Lloc1$-continuous in $\Omega^2$ in any other situation is now a matter of showing that $\mathcal{R}_1[u^n_\ell,\hat{u}^n] \to \mathcal{R}_1^{\rm c}[u_\ell,u_r]$ pointwise in $\{x<0\}$, $\mathcal{R}_1[\check{u}^n,u^n_r] \to \mathcal{R}_1^{\rm c}[u_\ell,u_r]$ pointwise in $\{x>0\}$, and applying the dominated convergence theorem of Lebesgue.
For this, it suffices to observe that either $\hat{u}^n \to \mathcal{R}_1[u_\ell,u_r](0^-)$ and the result follows then by the continuity of $\mathcal{R}_1$, or $\sigma(u^{n}_\ell, \hat{u}^{n}) \to 0$ and $\mathcal{R}_1[u_\ell,u_r]$ is constant equal to $u_\ell$ in $\{x <0\}$ and we obtain therefore again that $\mathcal{R}_1[u^n_\ell,\hat{u}^n] \to \mathcal{R}_1[u_\ell,u_r]$ pointwise in $\{x<0\}$. A similar analysis proves that $\mathcal{R}_1[\check{u}^n,u^n_r] \to \mathcal{R}_1[u_\ell,u_r]$ pointwise in $\{x>0\}$.
\end{proof}

In the next proposition we study the invariant domains of $\ronec$.
However, since in \cite{BenyahiaRosini01} we did not consider the invariant domains of $\mathcal{R}_1$, let us first point out that for any $0\leq \rho_{\min} < \rho_{\max} \leq R_{\rm f}''$, $0\leq v_{\min} < v_{\max} \leq V_{\rm c}$ and $W_{\rm c}\leq w_{\min} < w_{\max} \leq W_{\max}$ the following sets are invariant domains for $\mathcal{R}_1$.
\begin{gather*}
\left\{\vphantom{\Omega_{\rm f}''} u \in \Omega_{\rm f} \colon \rho_{\min} \leq \rho \leq \rho_{\max} \right\}
\\
\left\{\vphantom{\Omega_{\rm f}''} u \in \Omega_{\rm c} \colon w_{\min} \leq W(u) \leq w_{\max} ,~ v_{\min} \leq v \leq v_{\max} \right\}
\\
\left\{ u \in \Omega_{\rm f}'' \colon \rho_{\rm f}(w_{\min}) \leq \rho \leq \rho_{\rm f}(w_{\max}) \right\} \cup \left\{\vphantom{\Omega_{\rm f}''} u \in \Omega_{\rm c} \colon w_{\min} \leq W(u) \leq w_{\max} ,~ v \geq v_{\min} \right\}
\\
\rho_{\min}<R_{\rm f}' ~\Rightarrow~
\left\{\vphantom{\Omega_{\rm f}''} u \in \Omega_{\rm f} \colon \rho_{\min} \leq \rho \leq \rho_{\rm f}(w_{\max}) \right\} \cup \left\{\vphantom{\Omega_{\rm f}''} u \in \Omega_{\rm c} \colon W(u) \leq w_{\max} ,~ v \geq v_{\min} \right\}
\end{gather*}

\begin{pro}[Invariant domains for $\ronec$]\label{pro:inv-Rc}~
\begin{enumerate}[label={($I_1^{\rm c}\alph*$)},itemindent=*,leftmargin=0pt]\setlength{\itemsep}{0cm}%

\item\label{inv-Rc-a} 
If $Q_0 < Q(u_{\rm c}(W_{\max}))$, then $\Omega_{\rm f} \cup \{u \in \Omega_{\rm c} \colon Q(u) \leq Q_0 \le  p^{-1}(W_{\max}-v) \, v \}$ is the smallest invariant domain containing $\Omega_{\rm f}$.

\item\label{inv-Rc-b} 
If $Q_0 \ge Q(u_{\rm c}(W_{\max}))$, then $\Omega_{\rm f} \cup \{ u \in \Omega_{\rm c} \colon v = V_{\rm c} \}$ is the smallest invariant domain containing $\Omega_{\rm f}$.

\item\label{inv-Rc-c} 
If $Q_0 \geq Q(u_{\rm c}(W_{\rm c}))$, then $\Omega_{\rm c}$ is the smallest invariant domain containing $\Omega_{\rm c}$.

\item\label{inv-Rc-d} 
If $Q_0 < Q(u_{\rm c}(W_{\rm c}))$, then $\Omega_{\rm c} \cup \{ u \in \Omega'_{\rm f} \colon Q(u) = Q_0 \}$ is the smallest invariant domain containing $\Omega_{\rm c}$.
\end{enumerate}
\end{pro}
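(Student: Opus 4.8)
The plan is to prove each assertion by a double inclusion. I first check that the set $\mathcal{D}$ displayed in the statement is an invariant domain for $\ronec$, and then that $\mathcal{D}$ is contained in every invariant domain containing the prescribed base set ($\Omega_{\rm f}$ in \ref{inv-Rc-a} and \ref{inv-Rc-b}, $\Omega_{\rm c}$ in \ref{inv-Rc-c} and \ref{inv-Rc-d}). Since the intersection of invariant domains is again invariant, the second inclusion amounts to a generation statement: for every $u \in \mathcal{D}$ outside the base set I exhibit a Riemann datum with entries in the base set whose $\ronec$-solution attains the value $u$, so that any invariant domain containing the base set is forced to contain $u$. In \ref{inv-Rc-c} the base set already equals $\mathcal{D} = \Omega_{\rm c}$, so there only invariance must be checked.

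For the invariance I split $\mathcal{D}^2$ along $\Omega^2 = \mathcal{C}_1 \cup \mathcal{N}_1$. On $\mathcal{C}_1$ one has $\ronec = \mathcal{R}_1$, and I rely on the $\mathcal{R}_1$-invariant sets listed just before the statement, on the invariance of $\Omega_{\rm f}$ under $\mathcal{R}_{\rm LWR}$ and of $\Omega_{\rm c}$ under $\mathcal{R}_{\rm ARZ}$, and on the fact that the free-to-congested transition of $\mathcal{R}_1$ inserts only the state $u_m = (p^{-1}(w_\ell - v_r), v_r)$. On $\mathcal{N}_1$ I use \eqref{eq:Rc} and the classification of $(\hat u, \check u)$ recalled in cases \ref{a} and \ref{f}, verifying that $\hat u, \check u \in \mathcal{D}$ and that the waves of $\mathcal{R}_1[u_\ell, \hat u]$ on $\{x < 0\}$ and of $\mathcal{R}_1[\check u, u_r]$ on $\{x > 0\}$ do not leave $\mathcal{D}$. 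The delicate point is precisely that the inserted $u_m$ and $\hat u$ remain in $\mathcal{D}$: in \ref{inv-Rc-b}, where $Q_0 \ge Q(u_{\rm c}(W_{\max}))$ forces $\mathcal{N}^2 = \mathcal{N}_1^4 = \emptyset$ and hence always case \ref{a}, one has $\hat u = u_{\rm c}(w_\ell) \in \{v = V_{\rm c}\}$, and this curve is closed under the $\mathcal{R}_{\rm ARZ}$ contacts; in \ref{inv-Rc-d} the state $\check u$ of case \ref{f} is either congested with $\check v = v_r$ or coincides with the single extra free state of $\Omega_{\rm f}'$ with $\check q = Q_0$; in \ref{inv-Rc-a} one must keep $u_m$ inside the band along the $\mathcal{C}_1^4$ transitions, which is where the flow comparison below is used.

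For the minimality in \ref{inv-Rc-a} and \ref{inv-Rc-b} I argue in two steps. First I produce congested seeds as the constrained states $\hat u$: free data $u_\ell, u_r \in \Omega_{\rm f}$ with $q_\ell > Q_0$ give $(u_\ell, u_r) \in \mathcal{N}^1$, whence by case \ref{a} either $\hat u = u_{\rm c}(W(u_\ell))$ lies on $\{v = V_{\rm c}\}$, or $\hat u$ has $\hat q = Q_0$ and $\hat w = W(u_\ell)$ with velocity below $V_{\rm c}$; sweeping $W(u_\ell)$ lets these seeds realize a full range of velocities up to $V_{\rm c}$. Second, I spread over the congested part of $\mathcal{D}$ through the free-to-congested transition of $\mathcal{R}_1$, which inserts $u_m = (p^{-1}(W(u_\ell) - v_r), v_r)$ whenever the constraint is already met and so $\ronec = \mathcal{R}_1$: taking $u_\ell \in \Omega_{\rm f}$ of prescribed $W$ and a previously generated congested $u_r$ of prescribed velocity places in $\mathcal{D}$ the congested state of that $W$ and that velocity, and sweeping $W$ then produces the whole curve $\{v = V_{\rm c}\}$ in \ref{inv-Rc-b} and the whole band in \ref{inv-Rc-a}, boundary states arising from boundary data. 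For \ref{inv-Rc-d} the unique extra state $u \in \Omega_{\rm f}'$ with $Q(u) = Q_0$ appears as $\check u$ in case \ref{f}: for $(u_\ell, u_r) \in \mathcal{N}^2$ with $v_r$ near $V_{\rm c}$ one has $p^{-1}(W_{\rm c} - v_r)\,v_r > Q_0$ by $Q_0 < Q(u_{\rm c}(W_{\rm c}))$, which selects that case, and the strict monotonicity of $\rho \mapsto \rho\,\Vfree(\rho)$ from \eqref{H1} makes this state unique.

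The main obstacle is the sharp identification of the boundaries of the congested regions, which rests on two monotonicity facts extracted from \eqref{H1}--\eqref{H3}. The first is that, along a curve $W \equiv w$, the flow $g_w(v) \doteq p^{-1}(w - v)\,v$ has derivative $g_w'(v) = -\lambda_1/p_\rho$ with $\lambda_1 = v - \rho\,p_\rho$, and by \eqref{H2} the quantity $\lambda_1$ is increasing in $v$; hence $g_w$ is increasing on $[0, V_{\rm c}]$ as soon as $\lambda_1 < 0$ at $v = V_{\rm c}$. This is what yields $\max_{[0,V_{\rm c}]} g_{W_{\rm c}} = Q(u_{\rm c}(W_{\rm c}))$, excluding the alternative of case \ref{f} in \ref{inv-Rc-c}, and what turns $p^{-1}(W_{\max} - v)\,v \ge Q_0$ into the velocity band of \ref{inv-Rc-a}. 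Checking the sign of $\lambda_1$ at $v = V_{\rm c}$ for densities $\rho = p^{-1}(w - V_{\rm c})$ that may exceed $R_{\rm f}''$, where \eqref{H3} does not apply verbatim, is the most technical estimate. The second fact, needed for the invariance in \ref{inv-Rc-a}, is that at equal generalized momentum $W$ the free-phase flow dominates the flow of the congested state on $\{v = V_{\rm c}\}$; this guarantees that the states $u_m$ inserted along $\mathcal{C}_1^4$ never leave the band, and it too follows from \eqref{H1} and \eqref{H3}.
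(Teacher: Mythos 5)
Your proposal is correct and follows essentially the same route as the paper's proof: a double inclusion in which invariance of the displayed set is checked through the structure of $\ronec$ (the paper dispatches this by appeal to Definition~\ref{def:LWR-ARZ} and the classification of $(\hat{u},\check{u})$, exactly as you do), and minimality is obtained by generation --- constrained traces $\hat{u}$ arising from free--free data in $\mathcal{N}^1$ serve as congested seeds, which are then spread over the band, respectively over the curve $\{v=V_{\rm c}\}$, by the intermediate states $u_m=(p^{-1}(w_\ell-v_r),v_r)$ of the free-to-congested transitions of $\mathcal{R}_1$; the paper's chain for \ref{inv-Rc-a} (data in $\Omega_{\rm f}''$ generate the curve $\{Q=Q_0\}$, then data in $\Omega_{\rm f}'$ paired with that curve generate the edge $\{W(u)=W_{\rm c}\}$, then data in $\Omega_{\rm f}''$ paired with that edge generate the whole band) is precisely your seed-and-spread mechanism. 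The one estimate you leave open --- the sign of $\lambda_1$ at densities above $R_{\rm f}''$ --- is not a genuine gap, since it follows from \eqref{H2} and \eqref{H3}: by \eqref{H2} the map $\rho\mapsto\rho^2 p_\rho(\rho)$ is strictly increasing, so for $\rho>R_{\rm f}''$ one has $\rho\,p_\rho(\rho)>(R_{\rm f}'')^2\,p_\rho(R_{\rm f}'')/\rho$ and, integrating $p_\rho(s)>(R_{\rm f}'')^2\,p_\rho(R_{\rm f}'')/s^2$ over $[R_{\rm f}'',\rho]$, also $p(\rho)-p(R_{\rm f}'')>R_{\rm f}''\,p_\rho(R_{\rm f}'')-(R_{\rm f}'')^2\,p_\rho(R_{\rm f}'')/\rho$; combining the two inequalities gives $\rho\,p_\rho(\rho)>R_{\rm f}''\,p_\rho(R_{\rm f}'')-p(\rho)+p(R_{\rm f}'')$, and since every $u\in\Omega_{\rm c}$ satisfies $v\le W_{\max}-p(\rho)=V_{\min}+p(R_{\rm f}'')-p(\rho)$, one concludes $\lambda_1(u)=v-\rho\,p_\rho(\rho)<V_{\min}-R_{\rm f}''\,p_\rho(R_{\rm f}'')<0$, the last inequality by \eqref{H3}. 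Hence $\lambda_1<0$ on all of $\Omega_{\rm c}$, which establishes your monotonicity of $g_w$ on $[0,V_{\rm c}]$ and, with it, the second domination fact you invoke for the states $u_m$; note that the paper itself uses this monotonicity silently, asserting $Q(u_{\rm c}(W_{\rm c}))\ge p^{-1}(W_{\rm c}-v_r)\,v_r$ without justification in its proof of \ref{inv-Rc-c}, so on this point your write-up is actually more explicit than the original.
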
\noindent
The proof is postponed to Section~\ref{sec:inv-Rc}.

\subsection{Basic properties of \texorpdfstring{$\rtwoc$}{}}

Concerning $\rtwoc$, in general no significant positive result for consistency can be expected because $\mathcal{R}_2$ is not consistent.
In the following proposition we prove that the consistency of $\rtwoc$ is guaranteed only in very special invariant domains.

\begin{pro}
$\rtwoc$ is consistent in the invariant domain $\mathcal{D}_2 \doteq \{u \in \Omega_{\rm f} \colon Q(u) \le Q_0 \}$.
Moreover, if $Q(u_{\rm c}(W_{\rm c})) \le Q_0 \le Q(u_{\rm c}(W_{\max}))$ and $w_0 \in [W_{\rm c},W_{\max}]$ is such that $Q(u_{\rm c}(w_0)) = Q_0$, then for any fixed $\bar{w} \in [W_{\rm c},w_0]$ we have that $\rtwoc$ is consistent also in the invariant domains $\mathcal{D}_2' \doteq \{u \in \Omega_{\rm c} \colon W(u) \ge w_0 ,~ Q(u) \le Q_0 \}$ and $\mathcal{D}_2'' \doteq \{u \in \Omega \colon W(u) = \bar{w} ,~ Q(u) \le Q_0 \}$.
Moreover, $\rtwoc$ is not consistent in any other invariant domain containing either $\mathcal{D}_2$, or  $\mathcal{D}_2'$, or else $\mathcal{D}_2''$.
\end{pro}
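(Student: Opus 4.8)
The plan is to reduce the three positive consistency assertions to the already-established consistency of $\ronec$ in $\mathcal{D}_1=\{u\in\Omega\colon Q(u)\le Q_0\}$ (Proposition~\ref{pro:consistencyr1c}), and to prove the maximality (negative) assertions by exhibiting explicit Riemann data triggering one of the two distinct mechanisms that break consistency: activation of the constraint, and the intrinsic inconsistency of $\mathcal{R}_2$ across phase transitions with increasing $W$.

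For the positive part I would argue domain by domain, showing in each case that (i) the set is invariant for $\rtwoc$ and (ii) on it $\rtwoc$ coincides with $\ronec$; since all three sets are contained in $\mathcal{D}_1$, consistency then follows by restricting the consistency of $\ronec$ to the invariant subdomain. On $\mathcal{D}_2\subseteq\Omega_{\rm f}$ every pair is free--free with $q_\ell\le Q_0$, so the constraint is inactive and $\rtwoc=\ronec=\mathcal{R}_{\rm LWR}$; invariance holds because $Q$ is increasing on $\Omega_{\rm f}$ (as $Q'=\Vfree(\rho)+\rho\,\Vfree_{\rho}(\rho)>0$ by \eqref{H1}), so the LWR waves keep the flux below $\max\{q_\ell,q_r\}\le Q_0$. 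On $\mathcal{D}_2''$ all states share the value $W\equiv\bar w\le w_0$; the congested part of this level curve has flux at most $Q(u_{\rm c}(\bar w))\le Q_0$, so the constraint is again inactive, and since $W$ never increases across a transition the branch introduced by $\mathcal{R}_2$ is never used, whence $\rtwoc=\ronec=\mathcal{R}_1$ and the solution (a single $1$-wave, possibly followed by an $\mathcal{R}_1$ phase transition at constant $W=\bar w$) remains in $\mathcal{D}_2''$. The genuinely constrained case is $\mathcal{D}_2'\subseteq\Omega_{\rm c}$: here the constraint can be active (e.g. between $u_\ell$ with $w_\ell>w_0$ and $u_r=u_{\rm c}(w_0)$ one gets $Q(u_m)>Q_0$, so $(u_\ell,u_r)\in\mathcal{N}^2$), but one checks that on $(\mathcal{D}_2')^2$ one always has $Q_0\ge p^{-1}(W_{\rm c}-v_r)\,v_r$, so both constrained solvers fall into the same branch selecting $\hat w=w_\ell$, $\hat q=\check q=Q_0$, $\check v=v_r$; tracking this $\hat u$ and $\check u$ shows both belong to $\mathcal{D}_2'$, which simultaneously yields invariance and the identity $\rtwoc=\ronec$ on $(\mathcal{D}_2')^2$.

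The geometric inputs feeding the above are the monotonicity $Q'>0$ on $\Omega_{\rm f}$ from \eqref{H1}; the negativity of $\lambda_1=v-\rho\,p_\rho$ on $\Omega_{\rm c}$ (a consequence of \eqref{H3} together with $v\le V_{\rm c}\le V_{\min}$), which makes the flux along each level curve $\{W=w\}$ strictly increasing in $v$, so that $\{Q\le Q_0\}$ is a lower slice in $v$; and the standard structure of $\mathcal{R}_{\rm ARZ}$, a $1$-wave at constant $W$ followed by a $2$-contact at constant $v$. These monotonicities are exactly what confine the intermediate states to the prescribed domain.

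The hard part is the maximality statement. I would separate two obstructions. First, if a strictly larger invariant domain $D$ contains a state $u$ with $Q(u)>Q_0$, then, as in the proof of Proposition~\ref{pro:cons}, taking $u_m=u_r=u$ and using finite propagation speed one finds $\bar x>0$ with $\rtwoc[u_\ell,u](\bar x)=u$, while $Q(\rtwoc[u,u](0^\pm))\le Q_0<Q(u)$ by Proposition~\ref{pro:Rush1}; this contradicts \eqref{P2}, forcing $D\subseteq\mathcal{D}_1$. Second, for an invariant $D\subseteq\mathcal{D}_1$ strictly containing one of $\mathcal{D}_2,\mathcal{D}_2',\mathcal{D}_2''$, the extra states create a configuration in which $W$ increases across a free-to-congested transition, and there $\rtwoc$ inherits the failure of \eqref{P3} already exhibited for $\mathcal{R}_2$ (with $u_\ell\in\Omega_{\rm f}''$, $u_m,u_r\in\Omega_{\rm c}$, $w_\ell=w_m<w_r$, $v_m=v_r$). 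The delicate point, and the main obstacle, is to show that invariance forces the three states of such a counterexample to actually lie in $D$: one must close $D$ under $\rtwoc$, observing that solving the Riemann problem between an added state and a suitable state of the listed domain produces, through its middle state $u_m=(p^{-1}(w_\ell-v_r),v_r)$, further states of larger $W$ that must belong to $D$. This is precisely what excludes the ``thin'' enlargements (for instance appending only $\{W=W_{\rm c}\}\cap\Omega_{\rm c}$, which is not invariant) and pins down $\mathcal{D}_2,\mathcal{D}_2',\mathcal{D}_2''$ as maximal.
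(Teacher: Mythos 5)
Your proposal follows the same two-step strategy as the paper's proof: identify $\rtwoc$ with an already-understood solver on the three listed domains, then break consistency of any larger invariant domain through explicit Riemann data. The two halves compare differently, though. In the positive half you are \emph{more} careful than the paper, whose entire argument is the one-line observation that $\rtwoc=\mathcal{R}_1$ on $\mathcal{D}_2$, $\mathcal{D}_2'$ and $\mathcal{D}_2''$. As you correctly note, that identity fails on $\mathcal{D}_2'$ whenever $Q_0<Q(u_{\rm c}(W_{\max}))$: for $u_\ell\in\mathcal{D}_2'$ with $w_\ell>w_0$ and $u_r=u_{\rm c}(w_0)$ the ARZ middle state is $u_{\rm c}(w_\ell)$, whose flux exceeds $Q_0$, so $(u_\ell,u_r)\in\mathcal{N}^2$ and the constraint is active. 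Your repaired identity $\rtwoc=\ronec$ on $(\mathcal{D}_2')^2$ --- both constrained solvers select the branch $\hat w=w_\ell$, $\hat q=\check q=Q_0$, $\check v=v_r$, because $p^{-1}(W_{\rm c}-v_r)\,v_r\le Q(u_{\rm c}(W_{\rm c}))\le Q_0$ --- combined with $\mathcal{D}_2'\subseteq\mathcal{D}_1$ and Proposition~\ref{pro:consistencyr1c}, is the correct route and is presumably what the authors intended. (Both your write-up and the paper's remain thin at the edge case $\bar w=W_{\rm c}$ of $\mathcal{D}_2''$, whose free part contains $\Omega_{\rm f}'$-states: congested--free pairs there lie in $\mathcal{N}^3$ whenever $Q(u_{\rm f}(W_{\rm c}))>Q_0$, so ``the constraint is inactive'' needs a separate check.) In the negative half the roles reverse: the paper isolates three concrete obstructions and exhibits an \eqref{P3}-violating triple for each --- in particular, for a free state $u_1\in\Omega_{\rm f}''$ lying $W$-above a congested state $u_2$, it uses invariance to manufacture companion states $u_3\in\Omega_{\rm c}$, $u_4\in\Omega_{\rm f}''$ with $W(u_3)=W(u_1)$, $W(u_4)=W(u_2)$, and tests \eqref{P3} on $(u_4,u_3,u_2)$ --- whereas you invoke only the configuration $w_\ell=w_m<w_r$, $v_m=v_r$, and explicitly defer the invariance-closure argument that forces such a triple into the enlarged domain. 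That deferred step is where the real work of the maximality claim lies; in fairness, the paper leaves the corresponding reduction (that every invariant domain strictly containing one of the three sets meets one of its three cases) equally implicit, so the two arguments end at a comparable level of completeness there.
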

\begin{proof}
To prove the first part, it suffices to observe that $\rtwoc = \mathcal{R}_1$ in $\mathcal{D}_2$,  $\mathcal{D}_2'$ and $\mathcal{D}_2''$.
Then the maximality property is a direct consequence of the fact that $\rtwoc$ is not consistent in an invariant domain $\mathcal{D}$ in any of the following cases:
\begin{enumerate}[label={($\alph*$)},itemindent=*,leftmargin=0pt]\setlength{\itemsep}{0cm}%
\item\label{rgb1}
$\exists \, u \in \mathcal{D}$ such that $Q(u) > Q_0$.
\item\label{rgb2}
$\exists \, u_1 \in \mathcal{D} \cap \Omega_{\rm f}''$, $\exists \, u_2 \in \mathcal{D} \cap \Omega_{\rm c}$ such that $w_2(u_1) > w_2(u_2)$.
\item\label{rgb3}
$\exists \, u_1 \in \mathcal{D} \cap \Omega_{\rm f}''$, $\exists \, u_2, u_3 \in \mathcal{D} \cap \Omega_{\rm c}$ such that $w_2(u_1) = w_2(u_2) < w_2(u_3)$.
\end{enumerate}
Indeed, in the case \ref{rgb1}, following exactly the same arguments used in Proposition~\ref{pro:cons}, we can show that $\rtwoc$ does not satisfy \eqref{P2} in $\mathcal{D}$.
As a consequence, in the following we assume that $D \subseteq \{u \in \Omega \colon  Q(u) \leq Q_0 \}$.
In the case \ref{rgb2}, $u_3 \in \Omega_{\rm c}$ and $u_4 \in \Omega_{\rm f}''$ defined by $w_2(u_3) = w_2(u_1)$ and $w_2(u_4) = w_2(u_2)$ both belong to $D$.
Hence we can consider \eqref{P3} with $u_\ell = u_4$, $u_m = u_3$ and $u_r = u_2$.
Finally, in the case \ref{rgb3}, it is not restrictive to assume that $v_2 = v_3$.
Then we can consider \eqref{P3} with $u_\ell = u_1$, $u_m = u_2$ and $u_r = u_3$.\qedhere
\end{proof}

\begin{pro}
$\rtwoc$ is $\Lloc1$-continuous in $\Omega^2$.
\end{pro}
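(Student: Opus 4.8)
The plan is to follow the architecture of the proof of Proposition~\ref{pro:continuity}, splitting $\Omega^2$ according to the formulas that define $\rtwoc$ and checking $\Lloc1$-continuity separately in the interior of each region and across the interfaces between them. On the interior of $\mathcal{C}_2$ there is nothing to do, because there $\rtwoc\equiv\mathcal{R}_2$ and $\mathcal{R}_2$ is $\Lloc1$-continuous. On the interior of $\mathcal{N}_2$ I would invoke the preceding proposition, which exhibits $(\hat u,\check u)$ in each subcase $(T^1_2a)$--$(T^2_2b)$ as an explicit continuous function of $(u_\ell,u_r)$; since $\rtwoc[u^n_\ell,u^n_r]$ is a juxtaposition of $\mathcal{R}_2$-profiles and constant layers built from $u^n_\ell,u^n_r,\hat u^n,\check u^n$ (and, in the first alternative of Definition~\ref{def:R2c}, the state $u_{\rm f}(w_\ell)$ and the layer $\hat u^n$ on $\sigma(u_{\rm f}(w_\ell),\hat u^n)<x<0$), the convergence follows from the $\Lloc1$-continuity of $\mathcal{R}_2$ and the dominated convergence theorem, exactly as in Proposition~\ref{pro:continuity}.

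The first genuine task is continuity across the \emph{internal} interfaces of $\mathcal{N}_2$, i.e.\ the loci where the defining formula switches form: between the two alternatives of Definition~\ref{def:R2c}, between $(T^1_2a)$ and $(T^1_2b)$, between $(T^2_2a)$ and $(T^2_2b)$, and between the groups $\mathcal{N}^1\cup\mathcal{N}^3$ and $\mathcal{N}^2\cup\mathcal{N}^4_2$. On each such locus I would verify that the two expressions produce the same self-similar profile; the pair $(\hat u,\check u)$ may be written differently on the two sides, but the connecting $\mathcal{R}_2$-fans reconcile the two descriptions, so that the juxtaposed solution is continuous across the switch. As the switching conditions are either data-independent (for instance $Q_0\gtrless Q(u_{\rm c}(W_{\max}))$) or given by smooth relations in $(u_\ell,u_r)$, this reduces to a finite number of elementary identities.

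The core of the argument, and the main obstacle, is continuity across the outer interface $\partial\mathcal{C}_2=\partial\mathcal{N}_2$. Approaching a boundary point from $\mathcal{C}_2$ yields the unconstrained profile $\mathcal{R}_2[u_\ell,u_r]$, whose trace at $x=0$ already saturates the admissible flux; approaching from $\mathcal{N}_2$ yields the juxtaposed profile through $(\hat u^n,\check u^n)$, with $\hat q^n=\check q^n=\min\{Q_0,Q(u_{\rm c}(W_{\max}))\}$ by \eqref{eq:Rc2cond1}. The decisive structural fact — precisely what distinguishes $\rtwoc$ from $\ronec$ — is that the flux through $x=0$ selected by $\rtwoc$ saturates the value that the $\mathcal{C}_2$-side trace also realizes at $\partial\mathcal{C}_2$. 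I would therefore establish the same dichotomy used in Proposition~\ref{pro:continuity}: along any sequence $(u^n_\ell,u^n_r)\to(u_\ell,u_r)\in\partial\mathcal{C}_2$ from $\mathcal{N}_2$, either $\hat u^n\to\mathcal{R}_2[u_\ell,u_r](0^-)$, in which case the $\Lloc1$-continuity of $\mathcal{R}_2$ closes the argument, or the connecting wave between $u^n_\ell$ and $\hat u^n$ has speed $\sigma(u^n_\ell,\hat u^n)\to 0$, so the layer it creates has vanishing measure and the juxtaposition still converges to $\mathcal{R}_2[u_\ell,u_r]$; a symmetric statement handles $\check u^n$ on $\{x>0\}$. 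The hard part is checking that this dichotomy holds at \emph{every} point of $\partial\mathcal{C}_2$, and it is exactly there that the flux-saturation property is used: because $\rtwoc$ realizes the admissible flux, the connecting speed is forced to vanish as the datum reaches the boundary, whereas for $\ronec$ in case~\ref{a} the analogous speed stays bounded away from $0$ on the exceptional set $\mathcal{C}\cap\overline{\mathcal{N}^1}$, which is the very source of the discontinuity recorded in Proposition~\ref{pro:continuity}. Once the trace of $\mathcal{R}_2[u_\ell,u_r]$ at $x=0$ is identified in each of $\mathcal{N}^1,\dots,\mathcal{N}^4_2$ and compared with the limit of $(\hat u^n,\check u^n)$, the remaining verification is routine.
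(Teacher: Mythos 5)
Your plan is, in substance, the paper's own argument: away from the switching loci both rest on the $\Lloc1$-continuity of the unconstrained solvers together with the continuity of $(\hat u,\check u)$ and of the connecting speeds, and at the loci both use a vanishing-layer argument. The organizational difference is that the paper decomposes by phases ($\Omega_{\rm f}\times\Omega_{\rm f}$, $\Omega_{\rm c}\times\Omega_{\rm c}$, $\Omega_{\rm c}\times\Omega_{\rm f}$, $\Omega_{\rm f}\times\Omega_{\rm c}$) rather than by $\mathcal{C}_2/\mathcal{N}_2$; this costs little because $\Omega_{\rm f}$ and $\Omega_{\rm c}$ are disjoint unless $V_{\rm c}=V_{\min}$, so there are essentially no interfaces between your ``groups'' to check. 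What you call the internal interfaces of $\mathcal{N}_2$ appears in the paper in its one nontrivial instance, $(u_\ell,u_r)\in\Omega_{\rm c}\times\Omega_{\rm f}$ with $Q(u_{\rm c}(w_\ell))=Q_0$: there the paper shows that $\sigma(u_{\rm c}(w^n_\ell),u_{\rm f}(w^n_\ell))$ and $\sigma(u_{\rm f}(w^n_\ell),\hat u)$ converge to a common value, so that the intermediate $u_{\rm f}(w^n_\ell)$-layer vanishes; your ``finite number of elementary identities'' is exactly this computation and should be carried out, not just announced.

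There is, however, a genuine gap in your treatment of $\partial\mathcal{C}_2$. Your argument hinges on the claim that $\rtwoc$ ``realizes the admissible flux'', forcing the connecting speed to vanish as the datum reaches $\partial\mathcal{C}_2$. But \eqref{eq:Rc2cond1} --- quoted correctly earlier in your own text --- prescribes $\hat q=\check q=\min\{Q_0,Q(u_{\rm c}(W_{\max}))\}$, which equals $Q_0$ only if $Q_0\le Q(u_{\rm c}(W_{\max}))$. Suppose instead $Q_0>Q(u_{\rm c}(W_{\max}))$, which is possible when $V_{\rm c}<V_{\min}$ since then $Q(u_{\rm c}(W_{\max}))=p^{-1}(W_{\max}-V_{\rm c})\,V_{\rm c}$ may fall below $q_{\max}$. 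Take $u_\ell\in\Omega_{\rm f}$ with $q_\ell=Q_0$, $u_r=u_\ell$, and $u^n_\ell\in\Omega_{\rm f}$ with $\rho^n_\ell=\rho_\ell+1/n$. Then $(u^n_\ell,u_r)\in\mathcal{N}^1$, case ($T^1_2$a) gives $\hat u^n=u_{\rm c}(W_{\max})$, and $\sigma(u^n_\ell,\hat u^n)\to[Q(u_{\rm c}(W_{\max}))-Q_0]/[R_{\max}-\rho_\ell]<0$, so $\rtwoc[u^n_\ell,u_r]$ carries a layer of $u_{\rm c}(W_{\max})$ on a left-neighbourhood of $x=0$ of non-vanishing width, while $(u_\ell,u_r)\in\mathcal{C}^1$ gives $\rtwoc[u_\ell,u_r]\equiv u_\ell$ on $\{x<0\}$. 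Both horns of your dichotomy fail ($\hat u^n\not\to u_\ell$ and $\sigma(u^n_\ell,\hat u^n)\not\to 0$), and no argument can repair this: it is precisely the flux-deficit mechanism behind the discontinuity of $\ronec$ exhibited in Proposition~\ref{pro:continuity}, which, contrary to your closing remark, is not confined to $\ronec$. Your proof closes only under the additional hypothesis $Q_0\le Q(u_{\rm c}(W_{\max}))$ (automatic when $V_{\rm c}=V_{\min}$); be aware that the paper's own proof of the case $u_\ell,u_r\in\Omega_{\rm f}$, which invokes only the continuity of $\sigma(u_\ell,\hat u)$, $\sigma(\check u,u_r)$ and of $\mathcal{R}_{\rm LWR}$, glosses over the same point, so on this regime the discrepancy lies between the statement and the construction itself rather than in anything you could add.
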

\begin{proof}
\begin{itemize}[itemindent=*,leftmargin=0pt]\setlength{\itemsep}{0cm}%
\item If $u_\ell,u_r \in \Omega_{\rm f}$, then the $\Lloc1$-continuity of $\rtwoc$ follows from the continuity of $\sigma(u_\ell,\hat{u})$, $\sigma(\check{u},u_r)$ with respect to $(u_\ell,u_r)$ and from the continuity of $\mathcal{R}_{\rm LWR}$.
\item If $u_\ell,u_r \in \Omega_{\rm c}$, then $\rtwoc[u_\ell,u_r] = \ronec[u_\ell,u_r]$ and the continuity follows from Proposition \ref{pro:continuity}.
\item If $(u_\ell, u_r) \in \Omega_{\rm c} \times \Omega_{\rm f}$ and $Q(u_{\rm c}(w_\ell)) > Q_0$, then $\rtwoc[u_\ell,u_r] = \ronec[u_\ell,u_r]$ and the continuity follows from Proposition \ref{pro:continuity}.
\item If $(u_\ell, u_r) \in \Omega_{\rm c} \times \Omega_{\rm f}$ and $Q(u_{\rm c}(w_\ell)) < Q_0$, then the continuity follows from the continuity of $u_{\rm c}(w_\ell)$, $u_{\rm f}(w_\ell)$, $\sigma(u_{\rm f}(w_\ell), \hat{u})$ with respect to $u_\ell$ and Proposition~\ref{pro:continuity}.
\item If $(u_\ell, u_r) \in \Omega_{\rm c} \times \Omega_{\rm f}$ and $Q(u_{\rm c}(w_\ell)) = Q_0$, then it suffices to consider for $n$ sufficiently large $u_\ell^n$ defined by $v_\ell^n=v_\ell$ and $w_\ell^n = w_\ell - 1/n$.
Then $u_\ell^n \to u_\ell$ and $Q(u_{\rm c}(w_\ell^n)) < Q_0$.
Hence, roughly speaking, $\rtwoc[u_\ell^n,u_r]$ has two phase transitions, one from $u_{\rm c}(w_\ell^n)$ to $u_{\rm f}(w_\ell^n)$ and one from $u_{\rm f}(w_\ell^n)$ to $\hat{u}^n=\hat{u}$, that are not performed by $\rtwoc[u_\ell,u_r]$.
However, both $\sigma(u_{\rm c}(w_\ell^n),u_{\rm f}(w_\ell^n))$ and $\sigma(u_{\rm f}(w_\ell^n), \hat{u})$ converge to $\sigma(u_{\rm f}(w_\ell), \hat{u})$.
Therefore also in this case we have that $\rtwoc[u_\ell^n, u_r] \to \rtwoc[u_\ell, u_r]$ in $\Lloc1$.
\item Assume $(u_\ell, u_r) \in \Omega_{\rm f} \times \Omega_{\rm c}$, then the continuity in that case comes from the continuity of $\sigma(u_\ell,u_r)$, $\sigma(u_\ell,\hat{u})$, $\sigma(\check{u},u_r)$ and $\ronec$ with respect to $(u_\ell,u_r)$.\qedhere
\end{itemize}
\end{proof}

\begin{pro}[Invariant domains for $\rtwoc$]
~
\begin{enumerate}[label={($I_2^{\rm c}\alph*$)},itemindent=*,leftmargin=0pt]\setlength{\itemsep}{0cm}%

\item
If $Q_0 < Q(u_{\rm c}(W_{\max}))$, then $\Omega_{\rm f} \cup \{u \in \Omega_{\rm c} \colon Q(u) \leq Q_0 ,~ p^{-1}(W_{\max}-v) \, v \geq Q_0 \}$ is an invariant domain for $\rtwoc$.

\item
If $Q(u_{\rm c}(W_{\max})) \leq Q_0$, then $\Omega_{\rm f} \cup \{ (V_{\rm c},W_{\max}) \}$ is an invariant domain for $\rtwoc$. 

\item
If $Q_0 \geq Q(u_{\rm c}(W_{\rm c}))$, then $\Omega_{\rm c}$ is an invariant domain for $\rtwoc$.

\item
If $Q_0 < Q(u_{\rm c}(W_{\rm c}))$, then $\Omega_{\rm c} \cup \{ u \in \Omega'_{\rm f} \colon Q(u) = Q_0 \}$ is an invariant domain for $\rtwoc$.
\end{enumerate}
\end{pro}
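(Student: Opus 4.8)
The plan is to verify directly the defining property of an invariant domain: for every pair $(u_\ell,u_r)$ with $u_\ell,u_r\in D$ one checks that the profile $\rtwoc[u_\ell,u_r]$ takes values only in $D$. Following Definition~\ref{def:R2c}, I would split each of the four cases according to whether $(u_\ell,u_r)\in\mathcal{C}_2$, where $\rtwoc=\mathcal{R}_2$, or $(u_\ell,u_r)\in\mathcal{N}_2$, where the profile is the juxtaposition through the intermediate states $\hat u,\check u$. The two ingredients I would rely on are, first, the list of invariant domains for $\mathcal{R}_1$ recalled just before Proposition~\ref{pro:inv-Rc}, and second, the explicit description of $(\hat u,\check u)$ given by the proposition selecting $(\hat u,\check u)$ that follows Definition~\ref{def:R2c}.

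For the pairs in $\mathcal{C}_2$ I would argue that each of the four sets $D$, or in the mixed situations each of its free and congested pieces, is one of the listed $\mathcal{R}_1$-invariant domains or a union of such: $D=\Omega_{\rm c}$ in case~3 and $D\supseteq\Omega_{\rm c}$ in case~4 contain exactly the ARZ-invariant set $\Omega_{\rm c}$, while the free part $\Omega_{\rm f}$ present in cases~1 and~2 is LWR-invariant. The only place where $\mathcal{R}_2$ genuinely differs from $\mathcal{R}_1$ is the single front of $(R_2)$ joining $u_\ell\in\Omega_{\rm f}$ to $u_r\in\Omega_{\rm c}$ when $w_\ell<w_r$; since this front produces only the values $u_\ell$ and $u_r$, already in $D$, any set containing both endpoints, in particular every $\mathcal{R}_1$-invariant set, is automatically $\mathcal{R}_2$-invariant, and the $\mathcal{C}_2$ part is settled.

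For the pairs in $\mathcal{N}_2$ I would insert the closed-form values of $\hat u$ and $\check u$ from the selection proposition in each subcase, then check two things: that $\hat u,\check u\in D$, and that the connecting fans $\mathcal{R}_2[u_\ell,\hat u]$ on $\{x<0\}$ and $\mathcal{R}_2[\check u,u_r]$ on $\{x>0\}$ remain in $D$, together with the constant plateau $\hat u$ and the extra fan $\mathcal{R}_2[u_\ell,u_{\rm f}(w_\ell)]$ that appear in the distinguished $\mathcal{N}^3$ subcase of Definition~\ref{def:R2c}. Since in the relevant subcases $\check u\in\Omega_{\rm f}$ or $\check v=v_r$, and $\hat u$ lies either on an explicit $W$–curve at flux $Q_0$ or at $\hat u=u_{\rm c}(W_{\max})$, these reduce to one-variable inequalities that I would dispatch using the monotonicity of $p^{-1}$ granted by \eqref{H2} together with the threshold hypotheses relating $Q_0$ to $Q(u_{\rm c}(W_{\rm c}))$ and $Q(u_{\rm c}(W_{\max}))$.

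The main obstacle is the congested-state membership in case~1. There $\rtwoc$ pushes the intermediate state up the $v=V_{\rm c}$ line, selecting $\hat w=\max\{w_\ell,V_{\rm c}+p(Q_0/V_{\rm c})\}$, so one must prove that the resulting $\hat u$ still satisfies the upper defining inequality $p^{-1}(W_{\max}-\hat v)\,\hat v\ge Q_0$ of $D$. I expect this to follow by comparing the flux of $\hat u$ with the flux at the same velocity on the $W_{\max}$–curve, using $p^{-1}(W_{\max}-\hat v)\ge p^{-1}(w_\ell-\hat v)$ and the standing hypothesis $Q_0<Q(u_{\rm c}(W_{\max}))$, which guarantees the required room below the $W_{\max}$ curve. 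A second delicate point, calling for separate care rather than a new idea, is the behaviour at the degenerate boundary states, namely vacuum and the endpoints where $(R_2)$ reverts to $\mathcal{R}_1$ and reintroduces an intermediate congested state on a lower $W$–curve; these must be checked against the precise shape of each $D$ and against the geometry of the flux along the $W_{\rm c}$ and $W_{\max}$ curves, that is the sign of $\lambda_1$, equivalently the position of $V_{\rm c}$ relative to the flux-maximising velocity.
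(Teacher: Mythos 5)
Your handling of the $\mathcal{N}_2$ pairs (plugging in the explicit $(\hat u,\check u)$, and closing the case-1 membership of $\hat u$ via $p^{-1}(W_{\max}-\hat v)\,\hat v\ge p^{-1}(\hat w-\hat v)\,\hat v=\hat q$) is sound. The genuine gap is in your $\mathcal{C}_2$ step, which you settle by claiming that each target set $D$ is a union of the $\mathcal{R}_1$-invariant domains listed before Proposition~\ref{pro:inv-Rc}, hence $\mathcal{R}_1$-invariant, hence $\mathcal{R}_2$-invariant. This fails twice. First, a union of invariant domains is not invariant: invariance of the pieces says nothing about mixed pairs with $u_\ell$ in one piece and $u_r$ in the other, and those are exactly the pairs for which $\mathcal{R}_1$ inserts intermediate states, namely $u_{\rm c}(w_\ell)$, $u_{\rm f}(w_\ell)$, $(p^{-1}(w_\ell-v_r),v_r)$ and whole LWR fans, that lie in neither piece. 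Second, the sets of cases ($I_2^{\rm c}a$), ($I_2^{\rm c}b$), ($I_2^{\rm c}d$) are in fact \emph{not} $\mathcal{R}_1$-invariant: in case ($I_2^{\rm c}d$), taking $u_\ell\in\Omega_{\rm c}$ and $u_r=u_0$, the unique state of $\Omega_{\rm f}'$ with $Q(u_0)=Q_0$, the solution $\mathcal{R}_1[u_\ell,u_0]$ contains the LWR rarefaction joining $u_{\rm f}(w_\ell)$ to $u_0$, a one-parameter family of free states with densities in $(\rho_0,\rho_{\rm f}(w_\ell))$, none of which belongs to $\Omega_{\rm c}\cup\{u_0\}$; in case ($I_2^{\rm c}b$), taking $u_\ell\in\Omega_{\rm f}$ with $w_\ell<W_{\max}$ and $u_r=u_{\rm c}(W_{\max})$, $\mathcal{R}_1$ passes through $u_{\rm c}(w_\ell)\notin\Omega_{\rm f}\cup\{(V_{\rm c},W_{\max})\}$. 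So the premise of your implication chain is false, and the conclusion you need cannot be outsourced to unconstrained invariance.

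The proposition is nevertheless true, but for reasons your argument never establishes: one must check, pair type by pair type, which pairs actually lie in $\mathcal{C}_2$. In case ($I_2^{\rm c}d$) the dangerous pair above is always in $\mathcal{N}^3\subseteq\mathcal{N}_2$, because $Q(u_{\rm f}(w_\ell))\ge R_{\rm f}'\,\Vfree(R_{\rm f}')>\rho_0\,\Vfree(\rho_0)=Q_0$ by the strict monotonicity of $\rho\mapsto\rho\Vfree(\rho)$ from \eqref{H1}, so the constrained machinery replaces the offending fan; in case ($I_2^{\rm c}b$) the dangerous pair is in $\mathcal{C}_2$, but $\rtwoc$ there is the single $(R_2)$ front, not $\mathcal{R}_1$. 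Moreover, for the congested-to-free pairs that do lie in $\mathcal{C}^3$, keeping the fan inside the case-1 set requires the flux geometry along the 1-Lax curves (concavity from \eqref{H2} and $\lambda_1(u_{\rm f}(w))<0$ from \eqref{H3}, whence $Q(u_{\rm c}(w))<Q(u_{\rm f}(w))$ and the negativity of the phase-transition speed), facts you only allude to at the very end as a side issue. The paper avoids all of this by a different reduction: it compares $\rtwoc$ with the \emph{constrained} solver $\ronec$, invokes Proposition~\ref{pro:inv-Rc} (whose sets already encode the effect of the constraint) whenever $\rtwoc[u_\ell,u_r]=\ronec[u_\ell,u_r]$, and checks directly only the configurations where the two constrained solvers differ, using that there the range of $\rtwoc$ is either contained in that of $\ronec$ or consists of explicitly known states. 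Replacing your appeal to the $\mathcal{R}_1$-invariant domains by this comparison (or by the case-by-case trace analysis sketched above) is what is needed to close the gap.
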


\begin{proof}
\begin{enumerate}[label={($I_2^{\rm c}\alph*$)},itemindent=*,leftmargin=0pt]\setlength{\itemsep}{0cm}%
\item Whenever $u_\ell, u_r \in \Omega_{\rm f} \cup \{u \in \Omega_{\rm c} \colon Q(u) \leq Q_0 ,~ p^{-1}(W_{\max}-v) \, v \geq Q_0 \}$ are such that $\ronec[u_\ell,u_r] = \rtwoc[u_\ell,u_r]$ it is clear by Proposition \ref{pro:inv-Rc} that $ \rtwoc[u_\ell,u_r] \subset \Omega_{\rm f} \cup \{u \in \Omega_{\rm c} \colon Q(u) \leq Q_0 ,~ p^{-1}(W_{\max}-v) \, v \geq Q_0 \}$.
If $\ronec[u_\ell,u_r] \neq \rtwoc[u_\ell,u_r]$  and $u_\ell \in \Omega_{\rm f}, u_r \in \Omega_{\rm c}$ then $\rtwoc[u_\ell,u_r] \subset \ronec[u_\ell,u_r]$ in which case it is obvious that $\Omega_{\rm f} \cup \{u \in \Omega_{\rm c} \colon Q(u) \leq Q_0 ,~ p^{-1}(W_{\max}-v) \, v \geq Q_0 \}$ is an invariant domain for $\rtwoc$. Otherwise $\ronec[u_\ell,u_r] \neq \rtwoc[u_\ell,u_r]$  and $u_\ell, u_r \in \Omega_{\rm f}$ in which case $\rtwoc[u_\ell,u_r] \subset \Omega_{\rm f} \cup \{u \in \Omega_{\rm c} \colon Q(u) \leq Q_0 ,~ p^{-1}(W_{\max}-v) \, v \geq Q_0 \}$ because $\{\hat{u},\check{u}\} \in \Omega_{\rm f} \cup \{u \in \Omega_{\rm c} \colon Q(u) \leq Q_0 ,~ p^{-1}(W_{\max}-v) \, v \geq Q_0 \}$.
\item In this case we distinguish the following cases:
\begin{itemize}\setlength{\itemsep}{0cm}%
\item If $u_\ell,u_r \in \Omega_{\rm f}$ and $Q(u_\ell) \leq Q_0$, or if $u_\ell = (V_{\rm c}, W_{\max})$ then $\rtwoc[u_\ell,u_r]= \ronec[u_\ell,u_r]$ and it is clear that $\rtwoc[u_\ell,u_r] \subset \Omega_{\rm f} \cup \{ (V_{\rm c},W_{\max}) \}$.
\item If $u_\ell ,u_r \in \Omega_{\rm f}$ and $Q(u_\ell) > Q_0$ then $\rtwoc[u_\ell,u_r]( x<0 )$ consists of a discontinuity between $u_\ell$ and $(V_{\rm c},W_{\max})$ while $\rtwoc[u_\ell,u_r]( x>0 )= \ronec[u_\ell,u_r]( x>0 )$ and we again have $\rtwoc[u_\ell,u_r] \subset \Omega_{\rm f} \cup \{ (V_{\rm c},W_{\max}) \}$. 
\item If $u_r= (V_{\rm c},W_{\max})$, then $\rtwoc[u_\ell,u_r]$ consists of a discontinuity between $u_\ell$ and $(V_{\rm c},W_{\max})$.
\end{itemize}
\item In this case we have for any $u_\ell,u_r \in \Omega_{\rm c}$, $\rtwoc[u_\ell,u_r]=\ronec[u_\ell,u_r]$.
\item In this case the only time when $\rtwoc[u_\ell,u_r] \neq \ronec[u_\ell,u_r]$ is when $u_\ell \in \Omega_{\rm f}$ with $Q(u_\ell)=Q_0$, in which case $\rtwoc[u_\ell,u_r]$ consists of a single discontinuity between $u_\ell$ and $u_r$ and therefore $\rtwoc[u_\ell,u_r] \subset \Omega_{\rm c} \cup \{ u \in \Omega'_{\rm f} \colon Q(u) = Q_0 \}$.\qedhere
\end{enumerate}
\end{proof}

\section{Technical section}

\subsection{Proof of Proposition~\ref{pro:consistencyr1c}}\label{sec:consistencyr1c}
To simplify the exposition of the proof, we divide it into the following two lemmas.


\begin{lem}
$\ronec$ satisfies \eqref{P2} in $\mathcal{D}_1$.
\end{lem}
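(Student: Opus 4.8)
The statement to prove, \eqref{P2}, is the restriction/cutting half of consistency: if $u_m$ is the value attained by $\ronec[u_\ell,u_r]$ at some $\bar x$, then moving $u_m$ into the right (resp. left) datum must reproduce the left (resp. right) portion of the profile and stay constant elsewhere. Since $\mathcal{R}_1$ is already known to be consistent in $\Omega$ by \cite[Proposition~4.2]{BenyahiaRosini01}, the whole plan is to \emph{reduce} each instance of \eqref{P2} for $\ronec$ to the corresponding instance for $\mathcal{R}_1$. One cannot do this globally, because $\mathcal{D}_1$ is invariant for $\ronec$ but \emph{not} for $\mathcal{R}_1$ (e.g. in $\Omega_{\rm c}^2\cap\mathcal{N}^2$ the unconstrained intermediate state has flux $>Q_0$), so $\ronec\neq\mathcal{R}_1$ on $\mathcal{D}_1^2$ in general. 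The reduction is therefore organised by two case distinctions: whether $(u_\ell,u_r)\in\mathcal{C}_1$ or $(u_\ell,u_r)\in\mathcal{N}_1$, and the position of $\bar x$ relative to the constraint interface $x=0$.

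First I treat the unconstrained case $(u_\ell,u_r)\in\mathcal{C}_1$, where $\ronec[u_\ell,u_r]=\mathcal{R}_1[u_\ell,u_r]$ and the whole profile lies in $\mathcal{D}_1$. The key step is to show that both truncated data $(u_\ell,u_m)$ and $(u_m,u_r)$ again lie in $\mathcal{C}_1$. Using the consistency of $\mathcal{R}_1$ to compute $\mathcal{R}_1[u_\ell,u_m]$ and $\mathcal{R}_1[u_m,u_r]$, their trace at $0^\pm$ equals either $u_m$ (when $0$ lies on the ``constant'' side of $\bar x$) or $\mathcal{R}_1[u_\ell,u_r](0^\pm)$ (when $0$ lies on the ``solution'' side); in both situations the flux is $\le Q_0$, since $u_m\in\mathcal{D}_1$ and $(u_\ell,u_r)\in\mathcal{C}_1$. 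Hence $\ronec=\mathcal{R}_1$ on both truncated data, and \eqref{P2} for $\ronec$ is verbatim \eqref{P2} for $\mathcal{R}_1$.

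Next the constrained case $(u_\ell,u_r)\in\mathcal{N}_1$: here I write $\ronec[u_\ell,u_r]$ as in \eqref{eq:Rc}, namely $\mathcal{R}_1[u_\ell,\hat u]$ on $x<0$ and $\mathcal{R}_1[\check u,u_r]$ on $x>0$, with $(\hat u,\check u)$ selected by \eqref{eq:Rc1cond1}--\eqref{eq:Rc1cond2}. If $\bar x<0$, then $u_m$ is a trace of the left part; by consistency of $\mathcal{R}_1$ the datum $(u_\ell,u_m)$ falls into $\mathcal{C}_1$ (its $0^\pm$-trace is $u_m$), whereas for $(u_m,u_r)$ I must prove that the selection rule returns the \emph{same} pair $(\hat u,\check u)$. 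This is where the constrained geometry enters: on the negative-speed part only $1$-waves are present, along which the generalised momentum $w$ is constant and equal to $w_\ell$, so $w_m=w_\ell=\hat w$ and the third equality in \eqref{eq:Rc1cond1} is unchanged; as $u_r$ and $Q_0$ are unchanged, the maximiser singled out by \eqref{eq:Rc1cond2} is unchanged by the uniqueness of the preceding proposition. Once the pair is seen to coincide, \eqref{P2} follows by applying the consistency of $\mathcal{R}_1$ to $[u_\ell,\hat u]$, $[u_\ell,u_m]$ and $[u_m,\hat u]$, leaving the right part $\mathcal{R}_1[\check u,u_r]$ untouched. The case $\bar x>0$ is symmetric with the roles reversed: $(u_m,u_r)\in\mathcal{C}_1$, while $(u_\ell,u_m)\in\mathcal{N}_1$ re-selects the same $(\hat u,\check u)$, now because the trace identity $\mathcal{R}_1[\check u,u_r](0^+)=\check u$ survives when $u_r$ is replaced by the later trace $u_m$.

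The delicate point—and the one I expect to cost the most work—is exactly this reselection of $(\hat u,\check u)$ for the truncated problems, because the rule \eqref{eq:Rc1cond1}--\eqref{eq:Rc1cond2} is nonlocal (it couples the traces at $x=0$ with a maximisation), so its insensitivity to cutting must be checked phase by phase, across the sub-cases $\mathcal{N}^1,\mathcal{N}^2,\mathcal{N}^3,\mathcal{N}_1^4$, where the wave pattern of the left and right parts differs; the recurring ingredients are the invariance of $w$ along $1$-waves and the uniqueness of the maximiser. Finally the boundary value $\bar x=0$ needs a separate treatment, since then $u_m\in\{\hat u,\check u\}$ and the truncated data are $(u_\ell,\hat u)$, $(\check u,u_r)$ together with $(\hat u,u_r)$ or $(u_\ell,\check u)$; exploiting that $\mathcal{R}_1[u_\ell,\hat u]\equiv\hat u$ on $x>0$ and $\mathcal{R}_1[\check u,u_r]\equiv\check u$ on $x<0$, one verifies $(u_\ell,\hat u)\in\mathcal{C}_1$ and that $(\hat u,u_r)$ re-selects the same $(\hat u,\check u)$, which closes the argument.
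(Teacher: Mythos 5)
Your proposal is correct and follows essentially the same route as the paper's proof: both reduce \eqref{P2} to the consistency of $\mathcal{R}_1$, handling separately the inactive case (where one checks via the traces at $0^\pm$ that the truncated data stay in $\mathcal{C}_1$) and the active case, which within $\mathcal{D}_1$ is exactly $(u_\ell,u_r)\in\mathcal{N}^2\cup\mathcal{N}^3$ since $\mathcal{N}^1\cup\mathcal{N}^4_1$ would force $q_\ell>Q_0$. Your key step is also the paper's: for $\bar x$ on the constrained side, $u_m$ lies on the $1$-wave between $u_\ell$ and $\hat u$, so $u_m\in\Omega_{\rm c}$ with $w_m=w_\ell$, the truncated problem re-selects the same pair $(\hat u,\check u)$ by uniqueness of the selection, and the consistency of $\mathcal{R}_1$ closes the argument.
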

\begin{proof}
Fix $u_\ell, u_m, u_r \in \mathcal{D}_1$, $\bar{x} \in \R$ such that $\ronec[u_\ell,u_r](\bar{x}) = u_m$. 
We distinguish the following cases:
\begin{itemize}[itemindent=*,leftmargin=0pt]\setlength{\itemsep}{0cm}%
\item 
If $(u_\ell,u_r) \in \mathcal{N}^2 \cup \mathcal{N}^3$ and $\bar{x} \leq 0$ (the case $\bar{x} \geq 0$ is analogous), then $ u_m = \ronec[u_\ell,u_r](\bar{x}) = \mathcal{R}_1[u_\ell,\hat{u}](\bar{x})$ and by exploiting the consistency of $\mathcal{R}_1$ we have
\begin{align*}
\ronec[u_\ell,u_m](x) &=
\mathcal{R}_1[u_\ell,u_m](x) =
\begin{cases}
\mathcal{R}_1[u_\ell,\hat{u}](x) & \hbox{if } x < \bar x 
\\
u_m & \hbox{if } x \ge \bar x 
\end{cases} =
\begin{cases}
\ronec[u_\ell,u_r](x) & \hbox{if } x < \bar x ,
\\
u_m & \hbox{if } x \ge \bar x ,
\end{cases}
\\
\ronec[u_m,u_r](x) &=
\begin{cases}
\mathcal{R}_1[u_m,\hat{u}](x) & \hbox{if } x < 0
\\
\mathcal{R}_1[\check{u},u_r](x) & \hbox{if } x \geq 0
\end{cases}
 =
\begin{cases}
u_m & \hbox{if } x< \bar{x}
\\
\mathcal{R}_1[u_\ell,\hat{u}](x) & \hbox{if } \bar{x} \leq x < 0
\\
\mathcal{R}_1[\check{u},u_r](x) & \hbox{if } x\ge 0
\end{cases} =
\begin{cases}
u_m & \hbox{if } x< \bar{x} ,
\\
\ronec[u_\ell,u_r](x) & \hbox{if } x \ge \bar{x} .
\end{cases}
\end{align*}
\item 
If $(u_\ell,u_r) \notin \mathcal{N}^2 \cup \mathcal{N}^3$, then $(u_\ell,u_r), (u_\ell,u_m), (u_m,u_r) \in \mathcal{C}_1$ and \eqref{P2} comes from the consistency of $\mathcal{R}_1$.
\qedhere       
\end{itemize}
\end{proof}

\begin{lem}
$\ronec$ satisfies \eqref{P3} in $\mathcal{D}_1$.
\end{lem}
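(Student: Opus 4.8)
The goal is to show that $\ronec$ satisfies the second consistency condition~\eqref{P3} in the invariant domain $\mathcal{D}_1 = \{u \in \Omega \colon Q(u) \le Q_0\}$. The overall strategy mirrors the proof of~\eqref{P2}: I would take $u_\ell, u_m, u_r \in \mathcal{D}_1$ and $\bar{x} \in \R$ satisfying the two hypotheses $\ronec[u_\ell,u_m](\bar{x}) = u_m$ and $\ronec[u_m,u_r](\bar{x}) = u_m$, and prove the desired gluing formula for $\ronec[u_\ell,u_r]$. The essential point is that on $\mathcal{D}_1$ the constrained solver should coincide with the unconstrained solver $\mathcal{R}_1$, whose consistency is already established in~\cite[Proposition~4.2]{BenyahiaRosini01}. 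So the heart of the argument is a reduction: I would verify that whenever the three states lie in $\mathcal{D}_1$, the relevant pairs fall into $\mathcal{C}_1$ (so that $\ronec = \mathcal{R}_1$ on them), after which~\eqref{P3} follows directly from the consistency of $\mathcal{R}_1$.

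The key step is to rule out that any of the pairs $(u_\ell,u_m)$, $(u_m,u_r)$, $(u_\ell,u_r)$ lands in $\mathcal{N}_1$. First I would observe that the two hypotheses force $Q(u_m) \le Q_0$, which is automatic since $u_m \in \mathcal{D}_1$; more importantly, the conditions $\ronec[u_\ell,u_m](\bar{x}) = u_m$ and $\ronec[u_m,u_r](\bar{x}) = u_m$ constrain where the constraint at $x=0$ can become active. The delicate cases are those where a phase transition from $\Omega_{\rm f}$ to $\Omega_{\rm c}$ is present, i.e.\ pairs in $\mathcal{N}^2 \cup \mathcal{N}^3$, because for these the structure of $\ronec$ genuinely differs from $\mathcal{R}_1$ through the inserted states $\hat{u}, \check{u}$. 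I would argue that if $u_\ell, u_m, u_r$ all satisfy $Q(\cdot) \le Q_0$ and the two middle-state hypotheses hold, then none of the three pairs can belong to $\mathcal{N}^2 \cup \mathcal{N}^3$: the flux at $x=0$ of the unconstrained $\mathcal{R}_1$ solution is bounded by the fluxes of the endpoint states, all of which are at most $Q_0$, so the constraint is inactive and the pair lies in $\mathcal{C}_1$.

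Concretely, I would split into cases according to the phase membership of $u_\ell, u_m, u_r$, exactly as in the definition of $\mathcal{R}_1$. In each case I expect to check that the intermediate flux selected by $\mathcal{R}_1$ (namely $q_\ell$ in the free-free case, $p^{-1}(w_\ell - v_r)\,v_r$ in the congested-congested case, and $Q(u_{\rm f}(w_\ell))$ in the congested-free case) does not exceed $Q_0$ whenever the endpoint fluxes do, using the monotonicity properties of $Q$ along Lax curves that follow from~\eqref{H1}--\eqref{H3}. Once membership in $\mathcal{C}_1$ is confirmed for all three pairs, the identity $\ronec = \mathcal{R}_1$ reduces~\eqref{P3} to the already-known consistency of $\mathcal{R}_1$.

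The main obstacle I anticipate is the congested-to-free case and the mixed phase transitions, where the selected intermediate flux is not simply one of the endpoint fluxes but is governed by the auxiliary states $u_{\rm c}(w_\ell)$ and $u_{\rm f}(w_\ell)$. Here I would need a monotonicity lemma ensuring that $Q(u_{\rm f}(w_\ell)) \le \max\{Q(u_\ell), Q(u_r)\}$, or more precisely that the phase-transition flux respects the constraint whenever the data do; this relies on the geometry of $\Omega_{\rm c}$ and the fact that $\Omega_{\rm f} \cup \Omega_{\rm c}$ behaves as an invariant domain under $\mathcal{R}_1$ together with the bound $\lambda_1 \le \lambda_2 = v$. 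Verifying that these flux comparisons hold uniformly over the configurations compatible with the two hypotheses of~\eqref{P3} is the technical crux, but once it is in place the remainder is a routine transcription of the consistency of $\mathcal{R}_1$.
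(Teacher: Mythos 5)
Your overall strategy---reduce \eqref{P3} on $\mathcal{D}_1$ to the consistency of $\mathcal{R}_1$ by showing that all three pairs lie in $\mathcal{C}_1$---cannot be carried out, because it fails in the congested-to-free case, and the very ``monotonicity lemma'' you identify as the crux is false. Indeed, the inequality $Q(u_{\rm f}(w_\ell)) \le \max\{Q(u_\ell), Q(u_r)\}$ does not hold: take $u_\ell \in \Omega_{\rm c}$ with $w_\ell = W_{\max}$ and $v_\ell$ small, so that $q_\ell \le Q_0$; then $u_{\rm f}(w_\ell) = (R_{\rm f}'', V_{\min})$ and $Q(u_{\rm f}(w_\ell)) = q_{\max} > Q_0$, no matter how small $q_\ell$ and $q_r$ are (recall $Q_0 < q_{\max}$ by assumption). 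This is the modelling point: vehicles leave a queue at capacity, so the trace of $\mathcal{R}_1[u_\ell,u_r]$ at $x=0$ is $u_{\rm f}(w_\ell)$, whose flux typically exceeds both endpoint fluxes. Moreover, the hypotheses of \eqref{P3} do \emph{not} exclude such configurations: with $w_\ell = W_{\max}$ and $Q_0 \le Q(u_{\rm c}(w_\ell))$, take $u_m = \check{u}$ (the free state with flux $Q_0$ from case ($T^1_1$b)) and any $u_r \in \Omega_{\rm f}$ with $q_r \le Q_0$; then $u_\ell, u_m, u_r \in \mathcal{D}_1$, both middle-state hypotheses hold for small $\bar{x}>0$ (for $x>0$ the solution $\ronec[u_\ell,u_m]$ is constantly $\check{u}=u_m$, and the LWR waves of $\ronec[u_m,u_r]$ have positive speed), yet $(u_\ell,u_m)$ and $(u_\ell,u_r)$ belong to $\mathcal{N}^3$, so $\ronec[u_\ell,u_r] \neq \mathcal{R}_1[u_\ell,u_r]$ and no appeal to the consistency of $\mathcal{R}_1$ can conclude. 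In this case \eqref{P3} is still true, but it must be verified \emph{directly} from Definition~\ref{def:R1c}, gluing $\mathcal{R}_1[u_\ell,\hat{u}]$ on $\{x<0\}$ with $\mathcal{R}_1[\check{u},\cdot]$ on $\{x>0\}$; this is exactly what the paper does in its third case, where the hypotheses are first used to pin down $u_m$ (namely $u_m \in \Omega_{\rm f}$ with $q_m = \hat{q}$, $\hat{w}=w_\ell$) and the gluing identity is then checked by inspection.

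A secondary inaccuracy: your justification ``the flux at $x=0$ of the unconstrained $\mathcal{R}_1$ solution is bounded by the fluxes of the endpoint states'' already fails for $u_\ell, u_r \in \Omega_{\rm c}$, since there the trace at $x=0$ is the intermediate state with $w = w_\ell$, $v = v_r$, whose flux $p^{-1}(w_\ell - v_r)\,v_r$ exceeds both $q_\ell$ and $q_r$ whenever $v_r > v_\ell$ and $w_\ell > w_r$. What saves this case is not a flux bound in terms of the data but the hypotheses of \eqref{P3} themselves: they force $u_m$ to coincide with that intermediate state ($w_m = w_\ell$, $v_m = v_r$), so $u_m \in \mathcal{D}_1$ gives $p^{-1}(w_\ell - v_r)\,v_r = q_m \le Q_0$ and all three pairs land in $\mathcal{C}^2 \subseteq \mathcal{C}_1$. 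With this repair, your reduction does go through in the free-free, congested-congested and free-to-congested cases (in the last one $q_\ell \le Q_0$ alone gives membership in $\mathcal{C}^4_1$), which is precisely how the paper argues there; but the congested-to-free case genuinely requires the direct argument, and as written your proof has an unfixable gap at that point.
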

\begin{proof}
Fix $u_\ell, u_m, u_r \in \mathcal{D}_1$, $\bar{x} \in \R$ and assume that $\ronec[u_\ell,u_m](\bar{x}) = u_m = \ronec[u_m,u_r](\bar{x})$.
Then we consider the following cases:
\begin{itemize}[itemindent=*,leftmargin=0pt]\setlength{\itemsep}{0cm}%
\item 
If $u_\ell, u_r \in \Omega_{\rm f}$, then also $u_m \in \Omega_{\rm f}$. 
Hence \eqref{P3} follows from the consistency of $\mathcal{R}_1$, because in this case $(u_\ell,u_r), (u_\ell,u_m), (u_m,u_r) \in \mathcal{C}^1 \subseteq \mathcal{C}_1$.
\item
If $u_\ell,u_r \in \Omega_{\rm c}$, then also $u_m \in \Omega_{\rm c}$ with $w_m = w_\ell$, $v_m = v_r$.
Hence \eqref{P3} follows from the consistency of $\mathcal{R}_1$, because in this case $(u_\ell,u_r), (u_\ell,u_m), (u_m,u_r) \in \mathcal{C}^2 \subseteq \mathcal{C}_1$.
\item 
If $(u_\ell, u_r) \in \Omega_{\rm c} \times \Omega_{\rm f}$, then $u_m \in \Omega_{\rm f}$ and $q_m = Q(\hat{u})$, with $\hat{w} = w_\ell$, $\hat{v} = V_{\rm c}$. 
In this case it is easy to prove \eqref{P3}.
\item 
If $(u_\ell, u_r) \in \Omega_{\rm f} \times \Omega_{\rm c}$, then $v_m = v_r$ and $w_m = w_\ell$.
Hence \eqref{P3} follows from the consistency of $\mathcal{R}_1$, because in this case $(u_\ell,u_r), (u_\ell,u_m) \in \mathcal{C}_1^4 \subseteq \mathcal{C}_1$ and $(u_m,u_r) \in \mathcal{C}^2 \subseteq \mathcal{C}_1$.\qedhere
\end{itemize}
\end{proof}

We conclude the section by observing that the maximality of $\mathcal{D}_1$ follows from the proof of Proposition~\ref{pro:cons}.

\subsection{Proof of Proposition~\ref{pro:inv-Rc}}\label{sec:inv-Rc}

\begin{enumerate}[label={($I_1^{\rm c}\alph*$)},itemindent=*,leftmargin=0pt]\setlength{\itemsep}{0cm}%

\item[\ref{inv-Rc-a}] 
We prove that if $Q_0 < Q(u_{\rm c}(W_{\max}))$ and $\mathcal{D}$ is the smallest invariant domain for $\ronec$ containing $\Omega_{\rm f}$, then  $\mathcal{D} = \Omega_{\rm f} \cup \{u \in \Omega_{\rm c} \colon Q(u) \leq Q_0 \le p^{-1}(W_{\max}-v) \, v \}$.
\begin{enumerate}[label={``$\subseteq$''},itemindent=*,leftmargin=0pt]\setlength{\itemsep}{0cm}%
\item[``$\supseteq$'']
It suffices to observe that by assumption
\begin{align*}
&\mathcal{D}
\supseteq
\ronec[\Omega_{\rm f}'', \Omega_{\rm f}''](\R)
\supseteq
A \doteq \left\{u \in \Omega_{\rm c} \colon Q(u) = Q_0\right\}
~\Rightarrow\\
&\mathcal{D}
\supseteq
\ronec[\Omega_{\rm f}', A](\R)
\supseteq
B \doteq \left\{u \in \Omega_{\rm c} \colon W(u) = W_{\rm c} ,~ Q(u) \leq Q_0 \le p^{-1}(W_{\max}-v) \, v\right\}
~\Rightarrow\\
&\mathcal{D}
\supseteq
\ronec[\Omega_{\rm f}'', B](\R)
\supseteq
\left\{u \in \Omega_{\rm c} \colon Q(u) \leq Q_0 \le p^{-1}(W_{\max}-v) \, v \right\}.
\end{align*}
\item[``$\subseteq$'']
It suffices to observe that $\Omega_{\rm f} \cup \{u \in \Omega_{\rm c} \colon Q(u) \leq Q_0 \le p^{-1}(W_{\max}-v) \, v \}$ is an invariant domain for $\ronec$ by Definition~\ref{def:LWR-ARZ}.
\end{enumerate}

\item[\ref{inv-Rc-b}] 
We prove that if $Q_0 \ge Q(u_{\rm c}(W_{\max}))$ and $\mathcal{D}$ is the smallest invariant domain for $\ronec$ containing $\Omega_{\rm f}$, then  $\mathcal{D} = \Omega_{\rm f} \cup \{ u \in \Omega_{\rm c} \colon v = V_{\rm c} \}$.
\begin{enumerate}[label={``$\subseteq$''},itemindent=*,leftmargin=0pt]\setlength{\itemsep}{0cm}%
\item[``$\supseteq$'']
It suffices to observe that by assumption
\begin{align*}
&\mathcal{D}
\supseteq
\ronec[\Omega_{\rm f}'', \Omega_{\rm f}''](\R)
\supseteq
A \doteq \left\{u \in \Omega_{\rm c} \colon v = V_{\rm c} ,~ Q(u_{\rm f}(W(u))) > Q_0\right\}
~\Rightarrow\\
&\mathcal{D}
\supseteq
\ronec[\Omega_{\rm f}, A](\R)
\supseteq
\left\{ u \in \Omega_{\rm c} \colon v = V_{\rm c} \right\}
=
\mathcal{D} \cap \Omega_{\rm c}.
\end{align*}
\item[``$\subseteq$'']
It suffices to observe that $\Omega_{\rm f} \cup \{ u \in \Omega_{\rm c} \colon v = V_{\rm c} \}$ is an invariant domain for $\ronec$ by Definition~\ref{def:LWR-ARZ}.
\end{enumerate}

\item[\ref{inv-Rc-c}]
We prove that if $Q_0 \geq Q(u_{\rm c}(W_{\rm c}))$ and $\mathcal{D}$ is the smallest invariant domain for $\ronec$ containing $\Omega_{\rm c}$, then $\mathcal{D} = \Omega_{\rm c}$.
For this, it suffices to prove that $\Omega_{\rm c}$ is in fact an invariant domain.
It is easy to see that $\ronec[\mathcal{C}^2](\R) = \mathcal{R}_{\rm ARZ}[\mathcal{C}^2](\R) \subseteq \Omega_{\rm c}$.
Moreover, for any $(u_\ell,u_r) \in \mathcal{N}^2$ we immediately have that $u_\ell, \hat{u}, u_r \in \Omega_{\rm c}$ and $Q_0 \geq Q(u_{\rm c}(W_{\rm c})) \ge p^{-1}(W_{\rm c}-v_r) \, v_r$.
Hence, by \ref{f} we have that also $\check{u} \in \Omega_{\rm c}$.
As a consequence, by definition $\ronec[u_\ell,u_r](\R) = \mathcal{R}_{\rm ARZ}[u_\ell,\hat{u}](\R) \cup \mathcal{R}_{\rm ARZ}[\check{u},u_r](\R) \subseteq \Omega_{\rm c}$.

\item[\ref{inv-Rc-d}]
We prove that if $Q_0 < Q(u_{\rm c}(W_{\rm c}))$ and $\mathcal{D}$ is the smallest invariant domain for $\ronec$ containing $\Omega_{\rm c}$, then $\mathcal{D} = \Omega_{\rm c} \cup \{ u \in \Omega'_{\rm f} \colon Q(u) = Q_0 \}$.
\begin{enumerate}[label={``$\subseteq$''},itemindent=*,leftmargin=0pt]\setlength{\itemsep}{0cm}%
\item[``$\supseteq$'']
It suffices to observe that $\ronec[\mathcal{C}^2](\R) = \mathcal{R}_{\rm ARZ}[\mathcal{C}^2](\R) \subseteq \Omega_{\rm c}$ and that if $(u_\ell,u_r) \in \mathcal{N}^2$, then either $\check{u} \in \Omega_{\rm c}$ or $\check{u} \in \{ u \in \Omega'_{\rm f} \colon Q(u) = Q_0 \}$, see \ref{g}.

\item[``$\subseteq$'']
Let $\check{u}$ denote the unique element of $\{ u \in \Omega'_{\rm f} \colon Q(u) = Q_0 \}$.
Then it suffices to observe that $\ronec[\check{u},\Omega_{\rm c}](\R) \cup \ronec[\Omega_{\rm c},\check{u}](\R) \subseteq \{ u \in \Omega'_{\rm f} \colon Q(u) = Q_0 \}$.
\end{enumerate}
\end{enumerate}

\section*{References}

\bibliographystyle{elsarticle-harv}
\bibliography{biblio}





\end{document}